\newcommand{\pd}{\partial}
\newcommand{\vu}{\mathbf{u}}
\newcommand{\kk}{\mathbf{k}}
\newcommand{\vx}{\mathbf{x}}
\newcommand{\Lop}{\mathcal{L}}
\newcommand{\yy}{\mathbf{y}}
\newcommand{\ees}{e^{i\Delta T\lambda_{n}s}}
\newcommand{\eets}{e^{i\Delta T\lambda_{n}(t+s)}}
\newcommand{\pp}{\frac{1}{\eta}\int_{0}^{\eta}\rho\left(\frac{s}{\eta}\right)}
\declaretheorem[style=theorem, qed=$\blacklozenge$]{theorem}
\declaretheorem[style=lemma, qed=$\blacklozenge$]{lemma}
\author{
Peddle, Adam\\
University of Exeter\\
\texttt{ap553@exeter.ac.uk}
\and
Haut, Terry\\
Lawrence Livermore National Laboratory\\
\and Wingate, Beth\\
University of Exeter\\
}
\title{Parareal Convergence for Oscillatory PDEs with Finite Time-scale Separation}
\begin{document}

\maketitle

\begin{abstract}
A variant of the Parareal method for highly oscillatory systems of PDEs was proposed by \cite{Haut_Wingate_14}. In that work they proved superlinear convergence of the method in the limit of infinite time scale separation. Their coarse solver features a coordinate transformation and a fast-wave averaging method inspired by analysis of multiple scales PDEs and is integrated using an HMM-type method. However, for many physical applications the timescale separation is finite, not infinite.  In this paper we prove convergence for finite timescale separaration by extending the error bound on the coarse propagator to this case. We show that convergence requires the solution of an optimization problem that involves the averaging window interval, the time step, and the parameters in the problem. We also propose a method for choosing the averaging window relative to the time step based as a function of the finite frequencies inherent in the problem.
\end{abstract}

\section{Introduction}

A variation of the Parareal method\cite{Lions_etal_01}\cite{Maday_Turinici_03} for highly oscillatory systems of equations was proposed in \cite{Haut_Wingate_14}. The method constructs the coarse solver based on a coordinate transformation and fast-wave averaging motivated by multiscale analysis of PDEs \cite{Bogoliubov_61, Klainerman_Majda_81, Schochet_94} and performs the integration using techniques from the Heterogeneous Multiscale Method \cite{Engquist_Tsai_05}. They proved that the method provides parallel speedups\cite{Haut_Wingate_14} in the limit of infinite time scale separation. Finite time scale separation is an important case to understand for physical applications because many physical phenomena, such as those occurring in numerical weather prediction, have finite frequencies inherent in the problem, e.g. Earth's rotation rate is finite. In this paper we extend the work of \cite{Haut_Wingate_14} by showing that rapid convergence of the method is also possible for finite timescale separation by proving error bounds on the coarse solver in the case of finite timescale separation.

Examples of applications of the Parareal algorithm being applied to parabolic PDEs include simulations of financial markets (i.e. the Black-Scholes equation for an American put \cite{Bal_Maday_02} and a nonlinear parabolic evolutionary equation via the finite element method \cite{He_10}. Hyperbolic systems solved with Parareal include simulation of molecular dynamics \cite{Baffico_etal_02}, fluid/structure interaction \cite{Farhat_Chandesris_03}, solution of the Navier-Stokes equations \cite{Fischer_etal_03}, and reservoir modelling \cite{Garrido_etal_03}. In all of these applications, the degree of oscillatory stiffness was not sufficient to impede convergence, but it is known (\textit{cf.} \cite{Ariel_etal_16}) to be an issue for the Parareal method.

There have been several modifications to the Parareal method which apply to highly oscillatory systems which assume that the system may be separated into fast and slow variables. In terms of ODEs, \cite{Legoll_etal_13} have proposed a multiscale method for singularly perturbed ODEs where the fast dynamics are dissipative. Ariel et al (2016)\cite{Ariel_etal_16} propose a method for highly oscillatory ODEs which is multiscale in nature but does not require explicit knowledge of the fast and slow variables. Gander and Hairer (2014) \cite{Gander_Hairer_14} suggest Parareal methods for Hamiltonian dynamics. Approaches using symplectic integrators with applications to molecular dynamics are presented in, for example, \cite{Audozze_etal_09} and \cite{Bal_Wu_08}. Finally, \cite{Haut_Wingate_14} proposed a method which is motivated by a asymptotic solutions for fast singular limits of nonlinear evolutionary PDEs. It is an extension of this method which we study here and which we refer to as Asymptotic Parallel-in-Time (APinT). It takes its name from the modified coarse solver which is inspired by methods used in the asymptotic analysis of PDEs.  

In this work we are primarily interested in oscillatory stiffness. Oscillatory stiffness places a restriction on the convergence of the Parareal method due to accuracy and stability limitations it places on the timestep size. We consider oscillatory stiffness to be a phenomenon arising from the presence of rapid oscillations which restricts the coarse timestep, with the degree of oscillatory stiffness being the degree to which the timestep is restricted. As discussed by Higham and Trefethen\cite{Higham_93}, stiffness is a transient phenomenon involving finite time intervals. This is important here as in this paper we are concerned with mitigating oscillatory stiffness over the interval of the coarse timestep.

We consider as a model equation a PDE of the form:

\begin{equation}
    \frac{\partial \mathbf{u}}{\partial t} + \frac{1}{\varepsilon}\mathcal{L}\mathbf{u} + \mathcal{N}(\mathbf{u},\mathbf{u}) = 0,
\label{eq:full_eqn}
\end{equation}
where $\mathbf{u}$ is the vector of unknowns, $\mathcal{L}$ is a skew-Hermitian linear operator with purely imaginary eigenvalues, and $\mathcal{N}(\cdot,\cdot)$ is a nonlinear operator which is assumed to be of quadratic type. We further assume that the solution, $\mathbf{u} \in L^{2}$ and that we may approximate \cref{eq:full_eqn} as a finite system of ODEs. The linear term then induces temporal oscillations on an $\mathcal{O}(\varepsilon)$ timescale, which can require the use of prohibitively small timesteps for standard numerical integrators if $\varepsilon$ is small. If accuracy is required, as is necessary for the Parareal method, implicit methods must also use a small time step.

The contribution of this work lies in an improved error estimate of the asymptotically motivated coarse solver which permits a mathematical description of the relationship between the time stepping error and the time-averaging of the the nonlinear operator. This understanding leads to an improved mathematical understanding of the convergence of the APinT method. With this improved notion of accuracy, we are then able to prove that the APinT algorithm converges for finite time scale separation. This is an advance on the previously shown case in the limit of $\varepsilon\to 0$.

The slow solution relies on an averaged version of \cref{eq:full_eqn}, with the average taken over an infinite window in the limit as $\varepsilon\to 0$. This is estimated numerically by a finite sum over a sufficiently large window. It has been shown\cite{E_03}\cite{Engquist_Tsai_05}\cite{Haut_Wingate_14} that the length required for this window may be reduced through the use of a smooth kernel of integration. In the small-$\varepsilon$ limit, we find that this method provides a convergent algorithm. We have also found that for finite $\varepsilon$ the averaging window may be chosen such that the slow solution is sufficiently accurate that the Parareal method remains convergent, as shown in~\cref{fig:iterations} and discussed in~\cref{sect:conv_any_eps}.

\begin{figure}[!htb]
    \centering
    \includegraphics[scale=0.55]{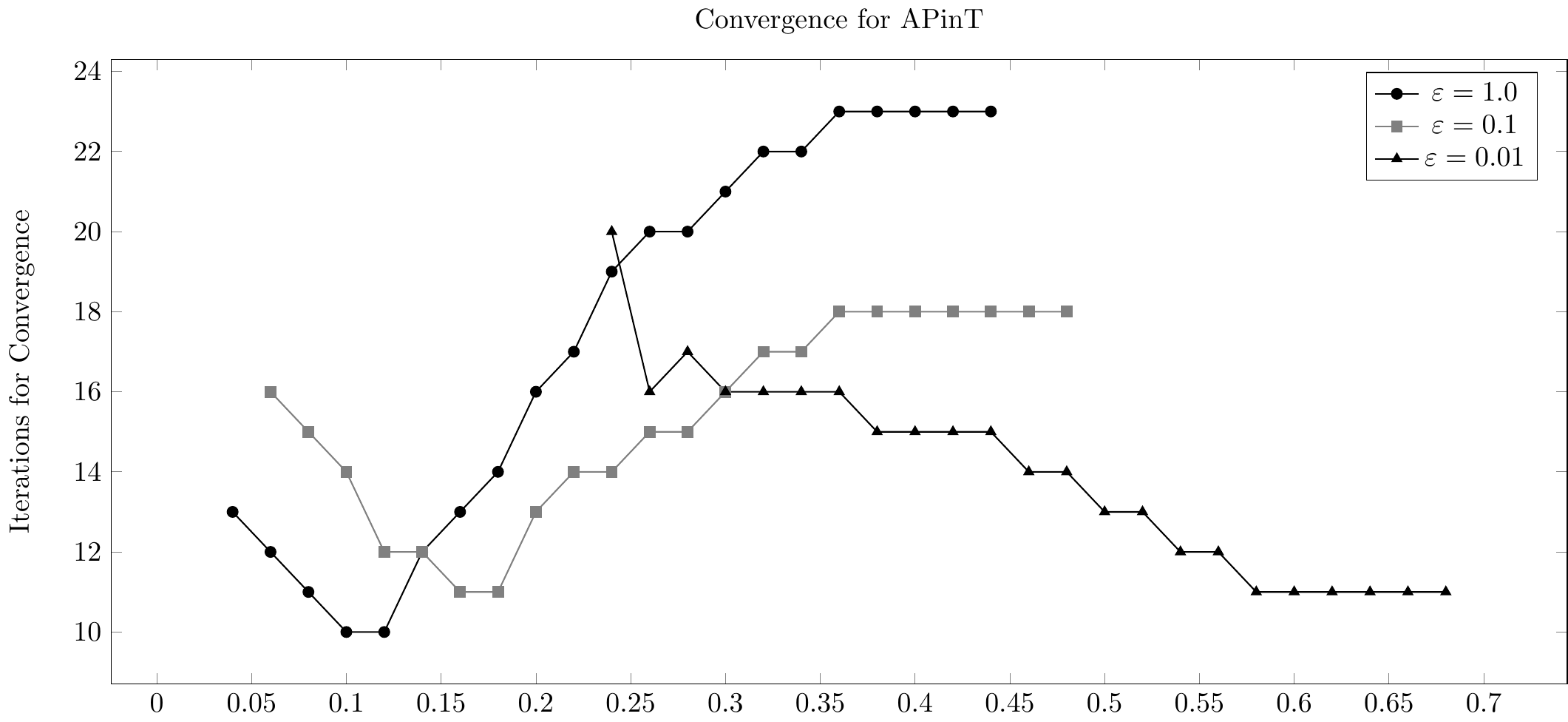}
    \caption{The number of iterations required for convergence of the APinT method for the 2-D rotating shallow water equations across three values of timescale separation, $\varepsilon$ (\textit{cf.} \cref{eq:full_eqn}). Note that towards the small-$\varepsilon$ limit, shown in red, the convergence improves with an increase in the size of the averaging window, as is consistent with the asymptotic theory. Of interest, however, there is a clear minimum is visible outside of this limit, especially for $\varepsilon = 1$, which marks a departure from the asymptotic theory where the limit is not taken. This makes clear both that the convergence of the method depends on the degree of scale separation, $\varepsilon$, and that the width of the averaging window, shown here proportional to the coarse timestep, may be chosen to control it. We shall rigorously explain this in~\cref{sect:bounds}. }
    \label{fig:iterations}
\end{figure}

In the next section we give an overview of the Parareal method to set the context of the work. In~\cref{sect:ass} we discuss the slow solution which is found by fast-wave averaging and which forms the coarse propagator of the APinT method. We will discuss the implications of the existence of near resonances on the slow solution. With that in mind, we proceed in~\cref{sect:bounds} to prove the error bounds and therefore the convergence of the APinT method. Finally, in~\cref{sect:rswe} we will show numerical experiments on the one-dimensional rotating shallow water equations and discuss some particularities of solving these equations with the APinT method.

\section{The Parareal Algorithm}
\label{sect:Parareal}
In this section we briefly review the Parareal method proposed by Lions et al.\cite{Lions_etal_01}, and further expanded upon by Maday and Turinici\cite{Maday_Turinici_03}. They proposed a generalisation of the concept of domain decomposition to the temporal domain, in which a `coarse' approximation to the solution is computed which is then refined, parallel in time, by the `fine' timestep. The solution has been shown to iteratively converge to the fine solution\cite{Gander_15}. In practice, this method requires that the coarse timestepping method permits large timesteps, that it be inexpensive to compute and sufficiently accurate that the method converges quickly. In general, the maximum timestep is $\mathcal{O}(\varepsilon)$, so in the case of $\varepsilon=\mathcal{O}(1)$, i.e. the less-stiff case, Parareal may be applied without any modifications. The insight of \cite{Haut_Wingate_14} was that a slow solution based on a coordinate transformation and a time average over the fast waves in the nonlinear operator provides a convergent and efficiently-computable coarse approximation. In fact, they showed that under suitable assumptions of smoothness, superlinear convergence is obtained as $\varepsilon\to 0$. 

Also related to this work is that of \cite{Maday_Turinici_05} who paid particular attention to the parallel implementation. They note that the coarse solver may employ a coarser timestep \cite{Lions_etal_01}, a coarser discretisation \cite{Fischer_etal_03}, and/or a simpler physical model \cite{Maday_Turinici_03}. The APinT coarse solver as presented here us a combination of the first and third of these. 

\begin{figure}[!htb]
\centering
\includegraphics[width=\textwidth]{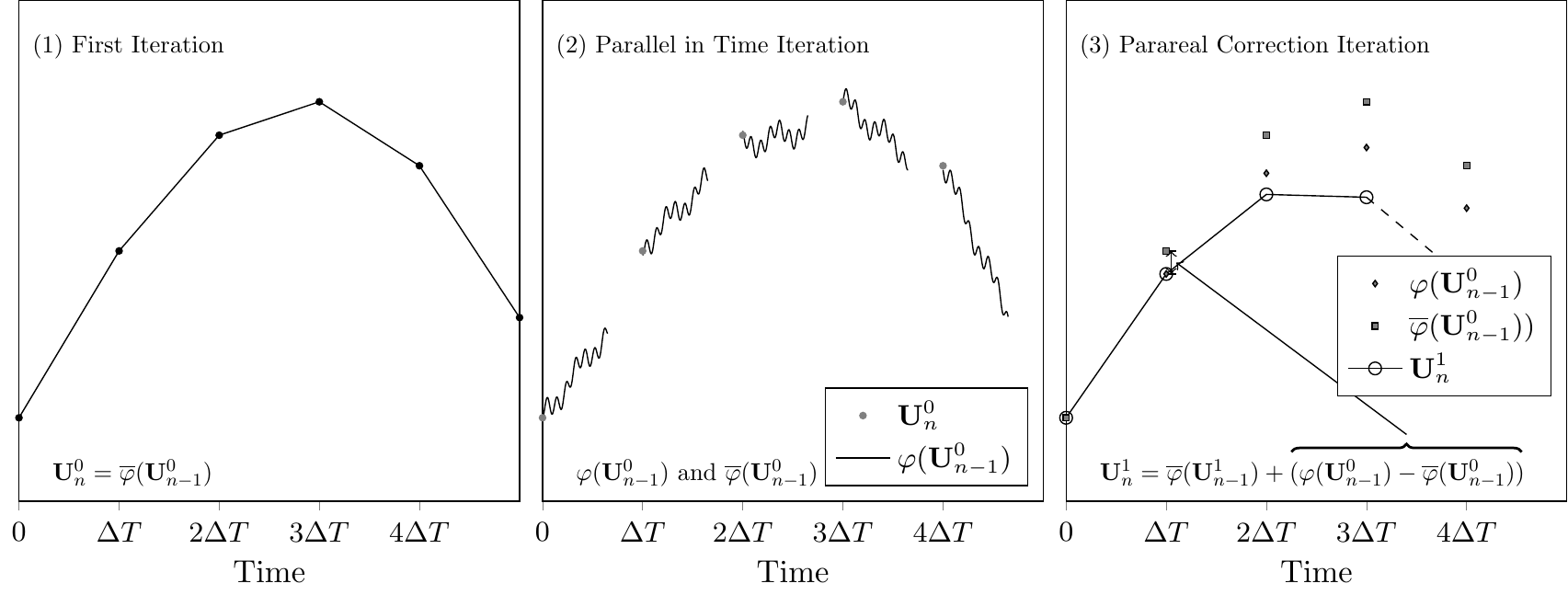}
    \caption{The Parareal algorithm. In (1), the coarse propagator is used to find an initial approximation to the solution. The solutions found by this iteration at coarse timesteps $n\Delta T$ are then taken as the initial conditions for parallel-in-time refinement by the fine proagator (2). Finally, the initial approximation is refined in a serial fashion by the Parareal correction iteration~\cref{eq:Parareal} shown in (3). The process in (2) and (3) is repeated, with the results of the correction iteration providing the new initial conditions for the next time-parallel iteration, until the desired level of convergence is obtained.}
\label{fig:Parareal}
\end{figure}

A sketch of the algorithm is in \cref{fig:Parareal}. We assume for the sake of simplicity that we are interested in solving~\cref{eq:full_eqn} on the interval $t\in[0,1]$. Let $\varphi_{t}(\mathbf{u}_{0})$ denote the evolution operator associated with~\cref{eq:full_eqn} such that $\mathbf{u}(t) = \varphi_{t}(\mathbf{u}_{0})$ solves the full equation. Similarly $\overline{\varphi}_{t}(\mathbf{u}_{0})$ solves the averaged equations.

We then divide the time domain into $N$ finite subintervals, $[n\Delta T, (n+1)\Delta T]$, where $n=0,\ldots N-1$. The Parareal algorithm begins with a coarse solve and then proceeds by computing approximations to the solution, $\mathbf{U}_{n}^{k}$, iteratively, following:

\begin{equation}
\mathbf{U}^{k}_{n} = \overline{\varphi}_{\Delta T}(\mathbf{U}^{k}_{n-1}) + (\varphi_{\Delta T}(\mathbf{U}^{k-1}_{n-1} - \overline{\varphi}_{\Delta T}(\mathbf{U}^{k-1}_{n-1})),\quad k=1,2,\ldots
    \label{eq:Parareal}
\end{equation}

Here, since the quantities $\mathbf{U}_{n-1}^{k-1}$ in the difference $(\varphi_{\Delta T}(\mathbf{U}^{k-1}_{n-1} - \overline{\varphi}_{\Delta T}(\mathbf{U}^{k-1}_{n-1}))$ are already computed at iteration $k$, the difference can be computed in parallel for all $n$. Since the computation of $\overline{\varphi}_{\Delta T}(\mathbf{U}_{n-1}^{k})$ is cheap, the overall computation is quick in a parallel sense if the iterates converge quickly.

\section{The Slow Solution}
\label{sect:ass}

Our interest in solving general PDEs which arise in physical modelling, in particular those of weather and climate, requires that we confront the problem of oscillatory stiffness over the interval of a coarse time step. In this section we shall describe the mathematical roots of the slow solution, provide a short description of its historical context, and review its discretisation as a coarse solver for the Parareal algorithm. As the convergence of the coarse solver for the APinT algorithm depends on the quality of the approximation, we will then investigate the numerical behaviour of this solver before providing an improved proof of the performance of the slow solver.

As an example of an application where the actual physics was thought to be asymptotic, but later shown not to be we look to the field of numerical weather prediction.  Historically, the physical notion of `slow' dynamics, called \textit{Quasi-Geostrophic} (QG) equations was a major advance in understanding weather. This insight was due to Charney\cite{Charney_48} who derived the `slow' equations. These reduced equations allowed the fast waves, which cause the oscillatory stiffness, to be filtered while still resolving the large-scale motions of the fluid. The work was later expanded upon, leading to what is generally recognised as the first successful numerical weather prediction \cite{Charney_49} \cite{Charney_Phillips_53}. Since those early days, the reduced equations have been rigorously shown to hold asymptotically in the limit of $\varepsilon \rightarrow 0$\cite{Embid_Majda_96}. In contrast to these results, modern weather prediction has found that the reduced equations are not accurate enough to be predictive and therefore they rely on numerical approximations of the full equations of motion\cite{Davies_etal_03}. As such, we are confronted with the problem that at least some of the oscillations matter even for the large scale flow and so we must find some way to resolve the fast waves in order to capture the full dynamics.

To address this problem for the Parareal method, \cite{Haut_Wingate_14} constructed a numerical approximation to the governing equations~\cref{eq:full_eqn} based on the consideration of fast singular limits\cite{Schochet_94}. One of the conclusions of this theory for when the linear operator, $\mathcal{L}$ is skew-Hermitian is that though the leading order dynamics is not slow itself, the slow dynamics evolve independently of the fast, while the fast dynamics are `swept' by the slow \cite{Schochet_94}. For realistic weather, we expect $\varepsilon \sim 0.01$ to $\varepsilon \sim 0.1$ \cite{Vallis_06}. Since $\varepsilon$ is finite for realistic cases, the timescale separation is also finite. Working in the limit of small $\varepsilon$ and applying the method of multiple scales, an averaged equation for equations of the form \cref{eq:full_eqn} was found by \cite{Embid_Majda_96}. 

For a slow timescale, $t$, and a fast timescale, $\tau$, they showed that averaged equations in the asymptotic limit of  $\varepsilon\to 0$  must satisfy:

\begin{equation}
    \begin{gathered}
        \frac{\partial\mathbf{\overline{u}}(\mathbf{x},t)}{\partial t} + \lim_{\tau \to \infty}\frac{1}{\tau}\int_{0}^{\tau}e^{s\mathcal{L}}\mathcal{N}(e^{-s\mathcal{L}}\mathbf{\overline{u}}(\mathbf{x},t),e^{-s\mathcal{L}}\mathbf{\overline{u}}(\mathbf{x},t))\,\mathrm{d}s  =0, \\
        \left.\mathbf{\overline{u}}(\mathbf{x},t)\right|_{t=0} = \mathbf{u}^{0}(\mathbf{x}),
    \end{gathered}
    \label{eq:nonlinear_averaged}
\end{equation}
where $\bar{\mathbf{u}}$ denotes the averaged $\mathbf{u}$ and where the integral is taken over the nonlinear operator, not the solution itself, and where there is a mapping by the exponential of the linear operator between the averaged and `full' solutions:
\begin{equation}
    \mathbf{u}^{0}(x,t,\tau) = e^{-\tau\mathcal{L}}\bar{\mathbf{u}}(\mathbf{x},t).
    \label{eq:leading_order}
\end{equation}
The above condition, studied in detail by \cite{Bogoliubov_61}, \cite{Klainerman_Majda_81}, and \cite{Schochet_94}, motivates our averaging, described below. We follow \cite{Haut_Wingate_14} and write the averaged equation in the following form:
\begin{equation}
    \frac{\partial \mathbf{\overline{u}}}{\partial t} + e^{t\mathcal{L}/\varepsilon}\overline{\mathcal{N}}(e^{-t\mathcal{L}/\varepsilon}\mathbf{\overline{u}}, e^{-t\mathcal{L}/\varepsilon}\mathbf{\overline{u}}) = 0,
    \label{eq:reduced_general_equation}
\end{equation}
from which the full solution may be obtained through the application of the matrix exponential as in~\cref{eq:leading_order}. The averaged equation, \cref{eq:reduced_general_equation}, has lost the factor of $1/\varepsilon$ from the full equation,~\cref{eq:full_eqn}, the main source of the oscillations, although another derivative will regain this term and so the oscillations have not been entirely eliminated.

In order to use this equation as a coarse solver for Parareal with finite timescale separation, \cite{Haut_Wingate_14} retreated from the asymptotic limit by taking the integral over the nonlinear operator in \cref{eq:nonlinear_averaged} over a finite time averaging window, rather than the infinite limit associated with $\epsilon \rightarrow 0$. This integral is approximated numerically by using a smooth kernel, $\rho(s)$, $0\le s \le 1$ which is chosen such that the length $T_{0}$ of the time window for the averaging is as small as possible, and approximate the averaged nonlinear operator~\cref{eq:nonlinear_averaged} as:
\begin{equation}
    \begin{aligned}
        \overline{\mathcal{N}}(\mathbf{\overline{u}}(t)) & \approx \frac{1}{T_{0}}\int_{0}^{T_{0}}\rho\left(\frac{s}{T_{0}}\right)e^{s\mathcal{L}}\mathcal{N}(e^{-s\mathcal{L}}\mathbf{\overline{u}}(t))\,\mathrm{d}s \\
        & \approx \frac{1}{\overline{M}}\sum\limits_{m=0}^{\overline{M}-1}\rho\left(\frac{s_{m}}{T_{0}}\right)e^{s_{m}\mathcal{L}}\mathcal{N}(e^{-s_{m}\mathcal{L}}\mathbf{\overline{u}}(t))
    \end{aligned}
    \label{eq:approx_avg}
\end{equation}

\begin{figure}[!htb]
    \centering
    \includegraphics[scale=0.7]{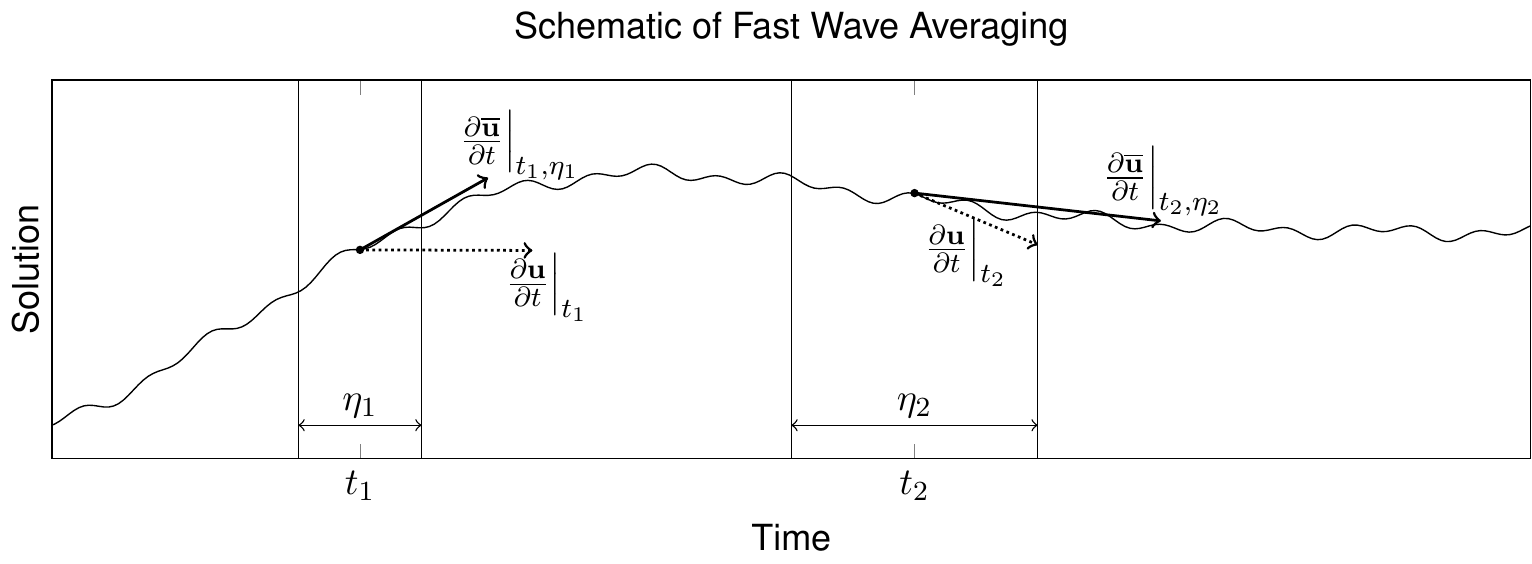}
    \caption{A schematic showing the averaged versus full derivatives for some toy solution exhibiting fast and slow behaviour, and for two different averaging window widths, $\eta_{1}$, and $\eta_{2}$. The solution here exhibits both rapid oscillations and a slower trend. Of importance here is the difference between the derivative considering the entire solution, i.e. $\frac{\partial\mathbf{u}}{\partial t}$ and that of the averaged solution with respect to the given averaging window, i.e. $\frac{\partial \mathbf{\overline{u}}}{\partial t}$. It is this averaged right-hand side which follows the slow solution without being affected by the fast oscillations and is therefore applicable to the coarse timestepping. }
    \label{fig:avg_window}
\end{figure}

The Heterogeneous Multiscale Method (HMM) \cite{Engquist_Tsai_05} is then applied to the slow equation by computing the averages numerically, as in \cref{eq:approx_avg}. In practice, the width of the averaging window may be freely chosen, as illustrated in~\cref{fig:avg_window}. The choice of this window has a significant effect on the convergence of the method, as illustrated in~\cref{fig:iterations}. As the computational cost of Parareal is proportional to the number of iterations required, an optimally-chosen window is necessary. In addition to a new error estimate our insight into the coarse error allows us to choose the optimal averaging window length as a function of the timescale separation.

\subsection{Triad Resonances}
\label{sect:triad}

In order to permit a long coarse timestep, the coarse solver proposed by \cite{Haut_Wingate_14} and described in~\cref{sect:ass} filters the nonlinear operator. The effect of this is a change in the content of the nonlinear triad interactions which is a function of both the degree of near resonance of the interaction and the length of the averaging window. As we discuss in \cref{sect:bounds}, the extent to which near-resonant sets are retained or rejected has an important impact on the convergence of the Parareal method.  Therefore, in this section we review nonlinear triad resonances for the model problem that we use in our tests in~\cref{sec:results}, noting that a similar approach applies to all systems of the form \cref{eq:full_eqn}.  

Systems governed by a quadratic nonlinearity with dispersive waves exhibit triad resonances\cite{Embid_Majda_96}\cite{Kramer_etal_02}\cite{Kadri_16}. Since we are motivated by geophysical modelling, we consider the general averaged equation for the rotating shallow water equations, which are commonly used as a test case for geophysical solvers. Following the notation of \cite{Embid_Majda_96}, we decompose the right-hand side of \cref{eq:nonlinear_averaged} in terms of its basis of eigenvectors and write:

\begin{equation}
\frac{\pd\overline{\vu}}{\pd t} = -\lim_{\tau\to\infty}\frac{1}{\tau}\int_{0}^{\tau} \sum\limits_{\kk\in\mathbb{Z}^{2}}\sum\limits_{\alpha = -1}^{1} \left[\sum\limits_{\kk=\kk_{1} + \kk_{2}}\sum\limits_{\alpha_{1},\alpha_{2}}\sigma_{\kk_{1}}^{\alpha_{1}}\sigma_{\kk_{2}}^{\alpha_{2}}C_{\kk,\kk_{1},\kk_{2}}^{\alpha,\alpha_{1}\alpha_{2}}e^{i(\kk\cdot\vx) - i\Omega_{\kk,\kk_{1},\kk_{2}}^{\alpha,\alpha_{1},\alpha_{2}}s/\varepsilon}\right]\mathbf{r}_{\kk}^{\alpha} \,\mathrm{d}s.
    \label{eq:triad_limit}
\end{equation}
where $\Omega_{\kk,\kk{1},\kk{2}}^{\alpha,\alpha_{1},\alpha_{2}} = \omega_{\kk_{1}}^{\alpha_{1}} + \omega_{\kk_{2}}^{\alpha_{2}} - \omega_{\kk}^{\alpha}$, $\alpha = -1,0,1$ refers to the different branches of the eigenvalues, $\kk$, $\kk_{1}$, and $\kk_{2}$ are the wavenumbers, $\omega_{\kk}^{\alpha}$ is the dispersion relation at a given $\alpha$ and wavenumber, $\sigma$ denotes the Fourier coefficient in this basis, $\mathbf{r}_{\kk}^{\alpha}$ is the right eigenvector of the linear operator, and $C_{\kk_{1},\kk_{2},\kk}^{\alpha_{1},\alpha_{2},\alpha}$ is an interaction coefficient \cite{Majda_Book}.

In the asymptotic case, the limit $\varepsilon\to 0$, by the orthogonality of the Fourier series, the only waves which remain after the wave averaging procedure (i.e. where $\tau\to\infty$) are the direct three-wave resonances (\textit{cf.} \cite{Embid_Majda_96}, \cite{Majda_Book}, \cite{Ward_Dewar_10}), i.e. the elements of the \textit{resonant set}, $\mathcal{S}_{\kk,\alpha}$ i.e.:

\begin{equation}
\mathcal{S}_{\kk,\alpha} = \{(\kk_{1},\kk_{2},\alpha_{1},\alpha_{2}):\kk = \kk_{1} + \kk_{2},\quad\omega_{\kk}^{\alpha} = \omega_{\kk_{1}}^{\alpha_{1}} + \omega_{\kk_{2}}^{\alpha_{2}}\}.
\end{equation}

The wave-averaged solution then follows:

\begin{equation}
\frac{\pd\sigma_{\kk}^{\alpha}}{\pd t} + \sum\limits_{\mathcal{S}_{\kk,\alpha}}\sigma_{\kk_{1}}^{\alpha_{1}}\sigma_{\kk_{2}}^{\alpha_{2}}C_{\kk_{1},\kk_{2},\kk}^{\alpha_{1},\alpha_{2},\alpha} = 0,
    \label{eq:wave_triad_form}
\end{equation}

It is this three-wave resonance condition from which the behaviour of the averaging kernel can be understood. In the limit as $\tau\to\infty$, only the direct resonances should remain. Because we have finite timescale separation,  this integral is approximated over a finite averaging window which must be large enough to filter the non-resonant triad.

As shown in \cite{Haut_Wingate_14}, using a finite time-averaging window and numerically integrating with respect to a smooth, finitely supported kernel permits this algorithm to result in a convergent Parareal algorithm in the limit of small $\varepsilon$.  

For finite $\varepsilon$ we take a finite average and so the solution set is larger than the direct resonant set and this has an important effect on the convergence of Parareal. To better explain the finite-$\varepsilon$ case, we define concentric shells of near-resonances, i.e. we rewrite the triad-based form~\cref{eq:triad_limit} as:

\begin{align}
    e^{s\mathcal{L}/\varepsilon}\mathcal{N}(e^{-s\mathcal{L}/\varepsilon}\overline{\mathbf{u}}(t),e^{-s\mathcal{L}/\varepsilon}\overline{\mathbf{u}}(t)) &= \sum_{\lambda_{n}}e^{i\lambda_{n}s}\mathcal{N}_{n}(\overline{\mathbf{u}}(t)),\\
    &= \sum_{\mathcal{S}_{\kk,\alpha}}\mathcal{N}_{n}(\overline{\mathbf{u}}(t)) + \sum_{\beta=1}^{\infty}\left(\sum_{\mathcal{S}_{\kk,\alpha}^{\epsilon_{\beta}}}e^{i\lambda_{n}s}\mathcal{N}_{n}(\overline{\mathbf{u}}(t))\right)
    \label{eq:near_res}
\end{align}
where $\mathcal{S}_{\kk,\alpha}^{\epsilon_{\beta}}$, $\beta = 1, 2, \ldots$ refers to a near-resonant set, i.e.:

\begin{equation}
    \mathcal{S}_{\kk,\alpha}^{\epsilon_{\beta}} = \left\{(\kk_{1},\kk_{2},\alpha_{1},\alpha_{2}):\kk = \kk_{1} + \kk_{2}, \quad\epsilon_{\beta-1} < \frac{1}{\varepsilon}|\omega_{\kk}^{\alpha} - \omega_{\kk_{1}}^{\alpha_{1}} + \omega_{\kk_{2}}^{\alpha_{2}}| \le \epsilon_{\beta}\right\},
    \label{eq:near_res_set}
\end{equation}
where $\epsilon_{0}=0$ by definition. The direct-resonant set results in a solution consisting of only the slow dynamics of the system -- which was shown in \cite{Embid_Majda_96} to be equivalent to the reduced equations for this system. Again, and as we will see in~\cref{sect:bounds}, the extent to which the near-resonant sets are retained and rejected by the averaging procedure is fundamental to the convergence of the APinT variation of the Parareal method.

\section{Error Bounds}
\label{sect:bounds}
Now that we have discussed the key elements of the algorithm, we are in a position to discuss and prove convergence for the case when $\varepsilon$ is finite. We shall first construct an improved error estimate for the coarse solution, and then use that result to prove the convergence of the Parareal method.
\subsection{A bound on the errors due to time-stepping and time-averaging in the coarse solver}
\label{sect:coarse}

In this section we employ the idea of near-resonant sets to extend the existing proof of APinT convergence\cite{Haut_Wingate_14} to the case of finite $\varepsilon$.  As has been demonstrated above (\textit{cf.}~\cref{fig:iterations}), the choice of the averaging window width, $\eta$, has a profound effect on the convergence of the method. While the choice of $\eta$ is well-understood for the limit of small $\varepsilon$\cite{Haut_Wingate_14}, we show here that $\eta$ may be similarly chosen to provide convergence for $\varepsilon$ up to $\mathcal{O}(1)$ for an appropriate coarse timestep. We first reduce~\cref{eq:full_eqn} to a standard form for ODEs. Following~\cref{sect:ass}, we write:

\begin{equation}
    \mathbf{v}_{t}(t) = e^{t\Lop/\varepsilon}N\left(e^{-t\Lop/\varepsilon}\mathbf{v}, e^{-t\Lop/\varepsilon}\mathbf{v}\right), \quad t \in [0,\Delta T],
    \label{eq:timescale}
\end{equation}
i.e. we are interested in the solution over a $\Delta T$ timescale. Let $\tau=t/(\varepsilon\Delta T)$, and so $\tilde{\mathbf{v}}\left(\tau\right)$, defined on the interval $\left[0,1/\varepsilon\right]$, 

\begin{equation}
\tilde{\mathbf{v}}\left(\tau\right)=\mathbf{v}\left(t\right).
\end{equation}

Then differentiation gives,

\begin{equation}
    \partial_{t}\mathbf{v}\left(t\right)=\partial_{t}\tilde{\mathbf{v}}\left(t/(\varepsilon\Delta T)\right)=\frac{1}{\varepsilon\Delta T}\partial_{\tau}\tilde{\mathbf{v}}\left(t/(\varepsilon\Delta T)\right)=\frac{1}{\varepsilon\Delta T}\partial_{\tau}\tilde{\mathbf{v}}\left(\tau\right).
\end{equation}

Upon this substitution into the coarse solver \cref{eq:reduced_general_equation} over the discrete time interval \cref{eq:timescale}, we arrive at the desired form which permits us to use the framework given in \cite{Sanders_Verhulst} where they have derived bounds for averaging methods. The aim of this is to modify and reapply their result for the error bound due to averaging, which holds on a general dynamical system of finite ODEs. This averaging error is one of the two major sources of error in the timestepping of the coarse solver. We then write the coarse solver in the form:

\begin{equation}
    \partial_{\tau}\tilde{\mathbf{v}}\left(\tau\right)=\varepsilon \Delta T e^{\tau \Delta T \Lop}N\left(e^{-\tau \Delta T \Lop}\tilde{\mathbf{v}}(\tau), e^{-\tau \Delta T \Lop}\tilde{\mathbf{v}}\left(\tau\right)\right).
    \label{eq:rescaled_full}
\end{equation}

While our interest is in solving PDEs describing physical systems, in practice we employ a Fourier spectral method, which has the effect of treating the PDE as a finite-dimensional system of ODEs. In this paper we show that the APinT method is convergent for finite systems of ODEs. This gives us access to the machinery of the numerical analysis of ODEs and averaging methods, following \cite{Sanders_Verhulst}. Let $\mathbf{x}$ solve the governing equations when they are written as a system of ODEs, i.e. in the form shown in~\cref{eq:rescaled_full}. For example, in the numerical experiments given in \cref{sec:results}, $\mathbf{x}$ is the Fourier solution. Then we may write:

\begin{equation}
\mathbf{x}_{t}=\varepsilon\mathbf{f}\left(\mathbf{x},t\right).
\label{eq:fine}
\end{equation}

Similarly, we consider the coarse solver \cref{eq:rescaled_full} written as a system of ODEs. Let $\mathbf{y}$ solve this averaged form of~\cref{eq:rescaled_full}, i.e.:

\begin{equation}
    \mathbf{y}_{t}=\varepsilon\overline{\mathbf{f}}\left(\mathbf{y},t\right),
\label{eq:coarse}
\end{equation}
where the averaging follows directly from the averaged equation,~\cref{eq:approx_avg} and is written:

\begin{equation}
    \overline{\mathbf{f}}\left(\mathbf{x},t\right)=\frac{1}{\eta}\int_{0}^{\eta}\rho\left(\frac{s}{\eta}\right)\mathbf{f}\left(\mathbf{y},t+s\right)ds,
    \label{eq:f_avg}
\end{equation}
where $\eta$ denotes the finite length of the averaging window.

\begin{lemma}
    Considering the initial value problems in $\mathbf{x}$ and $\mathbf{y}$ as stated above where $\mathbf{f}$ is $\mathbb{R}^{n} \times \mathbb{R}$ Lipschitz continuous with constant $\beta$ in $\mathbf{x}$ on $D \subset \mathbb{R}^{n}$ and $t$ on an $\mathcal{O}(1)$ timescale, i.e. for all $\mathbf{x}_{1},\mathbf{x}_{2}\in D$, $\beta$ is such that:
    
\begin{equation}
    \left\Vert \mathbf{f}\left(\mathbf{x}_{1},t\right)-\overline{\mathbf{f}}\left(\mathbf{\mathbf{x}}_{2},t\right)\right\Vert \leq\beta\left\Vert \mathbf{x}_{1}-\mathbf{x}_{2}\right\Vert.
\end{equation}

    Let:

\begin{equation}
M = \sup_{\mathbf{x}\in D}\sup_{0\leq t\leq L}\left\Vert \mathbf{f}\left(\mathbf{x},t\right)\right\Vert.
\end{equation}

    Then we can bound the difference between the exact solution $\mathbf{x}$ and the averaged solution $\mathbf{y}$ as:

\begin{equation}
\left\Vert \mathbf{x}-\mathbf{y}\right\Vert \leq M\left(1+\frac{1}{2}\beta \varepsilon\right)\varepsilon\Delta T\eta,
\end{equation}
    \label{lemma:avg_err}
\end{lemma}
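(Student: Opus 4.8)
The plan is to run the classical integral-equation averaging argument of Sanders and Verhulst. Since \cref{eq:fine} and \cref{eq:coarse} share the same initial data, I would first write both as Picard integral equations and subtract, obtaining $\mathbf{x}(t) - \mathbf{y}(t) = \varepsilon\int_{0}^{t}\left[\mathbf{f}(\mathbf{x}(\sigma),\sigma) - \overline{\mathbf{f}}(\mathbf{y}(\sigma),\sigma)\right]\,\mathrm{d}\sigma$, with the boundary term at $\sigma=0$ vanishing. I read the stated hypothesis as asserting that $\mathbf{f}$ (and hence, through \cref{eq:f_avg}, its windowed average $\overline{\mathbf{f}}$) is Lipschitz in its first argument with the common constant $\beta$, which is what the subsequent estimate requires. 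I would then insert $\pm\,\mathbf{f}(\mathbf{y}(\sigma),\sigma)$ into the integrand to split it into a genuine averaging discrepancy $\mathbf{f}(\mathbf{y},\sigma)-\overline{\mathbf{f}}(\mathbf{y},\sigma)$ and a state-difference term $\mathbf{f}(\mathbf{x},\sigma)-\mathbf{f}(\mathbf{y},\sigma)$, the latter controlled at once by $\beta\lVert\mathbf{x}-\mathbf{y}\rVert$.

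The crux is the averaging discrepancy. Using \cref{eq:f_avg} and the normalisation $\frac{1}{\eta}\int_{0}^{\eta}\rho(s/\eta)\,\mathrm{d}s = 1$, I would write $\mathbf{f}(\mathbf{y},\sigma)-\overline{\mathbf{f}}(\mathbf{y},\sigma) = \frac{1}{\eta}\int_{0}^{\eta}\rho(s/\eta)\,[\mathbf{f}(\mathbf{y},\sigma)-\mathbf{f}(\mathbf{y},\sigma+s)]\,\mathrm{d}s$ and integrate this over $\sigma\in[0,t]$. Interchanging the order of integration and shifting the inner time argument, the bulk of the $\sigma$-integral telescopes and only two boundary contributions survive, each over an interval of length at most $s\le\eta$; bounding them by $Ms$ apiece through $M=\sup\lVert\mathbf{f}\rVert$ and substituting $u=s/\eta$ leaves a factor $\eta\int_{0}^{1}u\,\rho(u)\,\mathrm{d}u$. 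This is the origin of the linear dependence on $\eta$ and produces the leading estimate scaling as $M\varepsilon\Delta T\eta$; the kernel moment $\int_{0}^{1}u\,\rho(u)\,\mathrm{d}u$ is where the numerical constant (ultimately the $\tfrac{1}{2}$) is pinned down.

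Finally I would close with Gronwall. Feeding the leading averaging bound into the state-difference term gives an inequality of the form $\lVert\mathbf{x}(t)-\mathbf{y}(t)\rVert \le \varepsilon M\Delta T\eta + \varepsilon\beta\int_{0}^{t}\lVert\mathbf{x}(\sigma)-\mathbf{y}(\sigma)\rVert\,\mathrm{d}\sigma$; because the error builds up essentially linearly from zero, one Picard re-substitution (equivalently, expanding the Gronwall exponential to first order over the integration interval) contributes the correction $\tfrac{1}{2}\beta\varepsilon$, yielding the stated bound $M\bigl(1+\tfrac{1}{2}\beta\varepsilon\bigr)\varepsilon\Delta T\eta$. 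The main obstacle I anticipate is the averaging step itself: extracting the linear-in-$\eta$ scaling from only $\lVert\mathbf{f}\rVert\le M$, with no assumed $t$-regularity, forces the Fubini boundary-cancellation rather than a crude pointwise bound, and the slow $\mathcal{O}(\varepsilon\Delta T)$ drift of $\mathbf{y}$ across the averaging window must be tracked carefully, since it is precisely this drift together with the Lipschitz feedback that accounts for the $\beta\varepsilon$ correction and its factor $\tfrac{1}{2}$. Reconciling the various placements of $\varepsilon$ and $\Delta T$ inherited from the rescaling in \cref{eq:rescaled_full} is the remaining piece of bookkeeping that needs care.
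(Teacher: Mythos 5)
Your proposal is correct and follows essentially the same route as the paper's appendix proof: the Fubini interchange of the kernel and time integrals with boundary remainders bounded by $M\eta\int_{0}^{1}s\rho(s)\,ds$ (the paper's $R_{1}$), the trajectory-drift term controlled by the Lipschitz constant and the ODE itself (the paper's $R_{2}$, source of the $\beta\varepsilon$ correction), and a final Gronwall closure. The only difference is organisational — you work directly with the integrand difference $\mathbf{f}-\overline{\mathbf{f}}$ along the trajectory, whereas the paper compares the windowed average of the antiderivative with the integral of the averaged right-hand side — but the estimates are identical.
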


    The above lemma follows from a modification of Lemma 4.2.8 in \cite{Sanders_Verhulst} in order to include the kernel of integration (\textit{cf.} appendix~\ref{sect:avg_err}). We have here bounded the error over an $\mathcal{O}(1)$ time interval instead of $\mathcal{O}(1/\varepsilon)$ so that the rate of convergence at different degrees of scale separation may be more easily compared, as in practice we are interested in simulations over fixed timescales. Taking the unmodified lemma provides a slightly different result as it gives the averaging error over a simulation time which scales with $\varepsilon$. Due to the numerical nature of the proof here, the appropriate timescale is over a coarse timestep.
    
\cref{lemma:avg_err} places a bound on the error committed by averaging over the fast waves, independent of the numerical methods used for spatial or temporal discretisation. Next, we consider the error arising from the numerical approximation of~\cref{eq:coarse}. In doing so, we will need to assume bounds on $\mathbf{f}$ in the region of phase space where $\mathbf{y}$ exists. Then assume that

\begin{equation}
    \|\partial_{\mathbf{y}}\mathbf{f}(\mathbf{y},t)\| \leq M_{1},\quad \mathbf{y}(t)\;\in\;D\subset\mathbb{R}^{n}.
    \label{eq:deriv_bounds}
\end{equation}

We assume that such a bound exists for higher spatial derivatives of $\mathbf{f}$, such that

\begin{equation}
    \max_{j}\left\|\frac{\partial^{j}\mathbf{f}}{\partial \mathbf{y}_{k}^{j}}\right\| \leq M, \quad 1 \leq k \leq n, \quad 0\leq j\leq p.
\end{equation}

\begin{lemma}
    Denote the numerical approximation to the averaged solution $\mathbf{y}(t)$ with timestep $\Delta T$ by a second-order timestepping method as $\mathbf{y}_{\Delta T}(t)$. Assume that $\mathbf{y}(t) = \varepsilon\mathbf{\overline{f}}(\mathbf{x},t)$ and that $\mathbf{\overline{f}} \in D$ as in \cref{eq:deriv_bounds}. Assume that integration is performed with respect to a smooth kernel, $\rho(\cdot)$, and let $\lambda_{n}$ denote the $n$-th near resonant triad (\textit{cf.} \cref{sect:triad}). Then the local time-stepping error of a second order time-stepping scheme applied to~\cref{eq:coarse} satisfies:

\begin{equation}
    \|\mathbf{y}(t) - \mathbf{y}_{\Delta T}(t)\| \le CM\varepsilon\Delta T^{3}\max_{x\in\mathbb{R}}\left(\lambda_{n}^{2}\frac{1}{\eta}\int_{0}^{\eta}\rho\left(\frac{s}{\eta}\right)e^{i\lambda_{n}s}ds\right),
\end{equation}
for some constant, $C \in \mathbb{R} < \infty$ and where $M$ is the bound over the nonlinear operator as given in \cref{lemma:avg_err}.
    \label{lemma:ts_err}
\end{lemma}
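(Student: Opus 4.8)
The plan is to treat this as a standard local-truncation-error estimate for a one-step method of order two, and then to translate the required derivative bound into the spectral language of \cref{eq:near_res}. For any consistent second-order one-step scheme the local error accrued over a single step of length $\Delta T$ is controlled by the third time-derivative of the exact trajectory: Taylor-expanding both the exact solution of \cref{eq:coarse} and the numerical increment about $t$, and matching through second order (this is precisely the computation underlying the numerical-averaging framework of \cite{Sanders_Verhulst}), gives
\begin{equation}
\|\mathbf{y}(t)-\mathbf{y}_{\Delta T}(t)\| \le C\,\Delta T^{3}\sup_{t}\left\|\partial_{t}^{3}\mathbf{y}(t)\right\|,
\end{equation}
so everything reduces to bounding $\partial_{t}^{3}\mathbf{y}$ and exhibiting its dependence on $\varepsilon$, on the near-resonant frequencies $\lambda_{n}$, and on the averaging kernel.

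Next I would extract the powers of $\varepsilon$. Differentiating $\partial_{t}\mathbf{y}=\varepsilon\overline{\mathbf{f}}(\mathbf{y},t)$ by the chain rule gives $\partial_{t}^{2}\mathbf{y}=\varepsilon\,\partial_{t}\overline{\mathbf{f}}+\varepsilon^{2}\,\partial_{\mathbf{y}}\overline{\mathbf{f}}\cdot\overline{\mathbf{f}}$ and, differentiating once more, $\partial_{t}^{3}\mathbf{y}=\varepsilon\,\partial_{t}^{2}\overline{\mathbf{f}}+\mathcal{O}(\varepsilon^{2})$, where the $\mathcal{O}(\varepsilon^{2})$ remainder collects every term carrying a factor $\partial_{\mathbf{y}}\overline{\mathbf{f}}$. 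All of these remainder terms are uniformly bounded on $D$ by the derivative hypotheses \cref{eq:deriv_bounds}, so a single factor of $\varepsilon$ factors out cleanly and the remaining $\varepsilon$-powers are absorbed into the constant $C$. The leading contribution to the stiffness is therefore $\varepsilon\,\partial_{t}^{2}\overline{\mathbf{f}}$.

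The key step is then to identify where the factor $\lambda_{n}^{2}$ and the kernel integral come from. Using the near-resonant decomposition \cref{eq:near_res} inside the averaging definition \cref{eq:f_avg}, the explicit time dependence of $\overline{\mathbf{f}}$ is entirely carried by oscillatory exponentials, so that
\begin{equation}
\overline{\mathbf{f}}(\mathbf{y},t)=\sum_{\lambda_{n}}e^{i\lambda_{n}t}\left(\frac{1}{\eta}\int_{0}^{\eta}\rho\!\left(\frac{s}{\eta}\right)e^{i\lambda_{n}s}\,ds\right)\mathcal{N}_{n}(\mathbf{y}).
\end{equation}
Each application of $\partial_{t}$ pulls down a factor $i\lambda_{n}$ from $e^{i\lambda_{n}t}$, so $\partial_{t}^{2}\overline{\mathbf{f}}$ carries $-\lambda_{n}^{2}$ multiplying the windowed kernel transform $\frac{1}{\eta}\int_{0}^{\eta}\rho(s/\eta)e^{i\lambda_{n}s}\,ds$ in each mode. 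Bounding the mode amplitudes $\|\mathcal{N}_{n}(\mathbf{y})\|$ by $M$ on $D$ and absorbing the (finite, since the Fourier truncation makes the triad sum finite) number of near-resonant shells into $C$, I obtain $\|\partial_{t}^{2}\overline{\mathbf{f}}\|\le M\max_{\lambda_{n}}\bigl(\lambda_{n}^{2}\,\frac{1}{\eta}\int_{0}^{\eta}\rho(s/\eta)e^{i\lambda_{n}s}\,ds\bigr)$. Substituting back yields the claimed bound.

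The main obstacle I anticipate is not the leading-order bookkeeping but the careful control of the subleading terms. The terms in $\partial_{t}^{3}\mathbf{y}$ that carry $\partial_{\mathbf{y}}\overline{\mathbf{f}}$ contribute fewer powers of $\lambda_{n}$ (at most one, from a single surviving $\partial_{t}$), and it must be argued that in the oscillatory regime of interest --- where the $\lambda_{n}$ are large and are exactly the source of stiffness --- these are dominated by the $\lambda_{n}^{2}$ term and may be swept into $C$ without spoiling the $\varepsilon$-scaling. One must also justify interchanging $\partial_{t}$ with the finite triad sum and with the kernel integral, and, crucially, keep the kernel transform \emph{inside} the maximum rather than bounding it crudely: it is precisely the $\lambda_{n}$-dependence of $\frac{1}{\eta}\int_{0}^{\eta}\rho(s/\eta)e^{i\lambda_{n}s}\,ds$ that encodes the near-resonant damping and that later makes the optimisation over $\eta$ meaningful.
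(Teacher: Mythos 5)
Your proposal is correct and follows essentially the same route as the paper: the standard third-derivative local-truncation bound, the triad/eigenvector decomposition so that each $\partial_{t}$ pulls down a factor $i\lambda_{n}$ while the kernel transform $\frac{1}{\eta}\int_{0}^{\eta}\rho(s/\eta)e^{i\lambda_{n}s}\,ds$ stays inside the maximum, and domination of the cross terms carrying $\partial_{\mathbf{y}}\mathbf{N}$ by the $\lambda_{n}^{2}$ term. The one step you flag as an anticipated obstacle is exactly the one the paper carries out explicitly, by collecting the third derivative into $\|\lambda_{n}+C_{f}\varepsilon M\|^{2}$ and using the binomial theorem (with the case split $|\lambda_{n}|\le 1$ versus $|\lambda_{n}|>1$) to absorb the lower-order-in-$\lambda_{n}$ contributions into the constant.
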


\begin{proof}
    The timestepping error of a $p$-th order scheme is bounded by \cite{Kincaid_Cheney_91}:

\begin{equation}
    \|\mathbf{y}(t) - \mathbf{y}_{\Delta T}(t)\| \le C_{t}\left(\Delta T\right)^{p+1}\max_{t}\left\Vert \frac{\mathrm{d}^{p+1}\mathbf{y}}{\mathrm{d}t^{p+1}}\left(t\right)\right\Vert _{2},
\end{equation}

   First, decompose $\mathbf{f}$ in terms of its basis of eigenvectors as discussed in \cref{sect:triad}. As with~\cref{eq:triad_limit}, we may write the solution as a sum of ODEs, each for a specific resonant nearness, $\lambda_{n}$. Then for the $j$-th component of $\mathbf{y}$, we write

\begin{equation}
\frac{\mathrm{d}y_{j}}{\mathrm{d}t} = \varepsilon\frac{1}{\eta}\int_{0}^{\eta}\rho\left(\frac{s}{\eta}\right)\sum_{n}\Delta T\eets\mathbf{N}_{n,j}(\yy)\,\mathrm{d}s,
\end{equation}
where the nearness of the resonances in any particular ODE is exposed through the eigenvalue sum, $\lambda_{n}$, in the exponent and where the subscript $,j$ denotes the $j$-th component and not a derivative, as it would with Einstein's notation. We then seek the third time derivative, which is found to be

\begin{multline}
\frac{\mathrm{d}^{3}y_{j}(t)}{\mathrm{d}t^{3}} = \varepsilon\pp \left(i^{2}\Delta T^{3}\sum_{n}\lambda_{n}^{2}\eets\mathbf{N}_{n,j} +\right. \\
\left. 2i\Delta T^{2}\sum_{n}\sum_{k}\lambda_{n}\eets\frac{\partial \mathbf{N}_{n,j}(\yy)}{\partial y_{k}}\frac{\mathrm{d}y_{k}(t)}{\mathrm{d}t}+\right.\\
\left.\Delta T\sum_{n}\sum_{k,l}\eets\frac{\partial^{2}\mathbf{N}_{n,j}(\yy)}{\partial y_{k}\partial y_{l}}\frac{\mathrm{d}y_{k}(t)}{\mathrm{d}t}\frac{\mathrm{d}y_{l}(t)}{\mathrm{d}t}+ \right. \\
\left.\Delta T\sum_{n}\sum_{k}\eets\frac{\partial\mathbf{N}_{n,j}(\yy)}{\partial y_{k}} \frac{\mathrm{d}^{2}y_{k}(t)}{\mathrm{d}t^{2}} \right)\,\mathrm{d}s.
\end{multline}

This is then the right-hand side which is integrated with respect to the smooth kernel. The magnitude of the near-resonant triad, $\lambda_{n}$, now presents itself as a multiplier on the complex exponential. It is then clear that it is this value, which is zero for direct resonances but becomes large in general, which is the source of numerical stiffness. For convenience, we introduce

\begin{equation}
P(\eta) = \pp\ees\,\mathrm{d}s; \quad e_{n}(t) = e^{i\Delta T\lambda_{n}t},
\end{equation}
then

\begin{multline}
\frac{\mathrm{d}^{3}y_{j}(t)}{\mathrm{d}t^{3}} = \varepsilon \Delta T^{3} P(\eta)\sum_{n}\left[-\lambda_{n}^{2}e_{n}\mathbf{N}_{n,j} +
 \left(2i\varepsilon\sum_{k}\lambda_{n}e_{n}\frac{\partial\mathbf{N}_{n,j}}{\partial y_{k}}\right)\left(\sum_{n'}e_{n'}\mathbf{N}_{n',j}\right) + \right. \\
\left. \varepsilon^{2}\sum_{k,l}e_{n}\frac{\partial^{2}\mathbf{N}_{n,j}}{\partial y_{k}\partial y_{l}}\left(\sum_{n'}e_{n'}\mathbf{N}_{n',j}\right)\left(\sum_{n''} e_{n''}\mathbf{N}_{n'',j}\right) + \right. \\
\left.\sum_{k'}e_{n}\frac{\partial \mathbf{N}_{n,j}}{\partial y_{k'}} \Biggl( \varepsilon\sum_{n'}\lambda_{n'}e_{n'}\mathbf{N}_{n',j} +
\varepsilon^{2}\sum_{n''}e_{n''}\mathbf{N}_{n'',j}\sum_{n'''}\sum_{l'}e_{n'''}\frac{\partial\mathbf{N}_{n''',j}}{\partial y_{l'}}\Biggr) \right],
\end{multline}

In bounding the timestepping error, we are interested in the norm of this quantity. Recalling that we are working with a finite-dimensional system of ODEs and applying the triangle and Cauchy-Schwarz inequalities we find that

\begin{multline}
\left\|\frac{\mathrm{d}^{3}y_{j}(t)}{\mathrm{d}t^{3}}\right\| \le \varepsilon\Delta T^{3}\|P(\eta)\|\sum_{n}|\mathbf{N}_{n,j}|\left(\|\lambda_{n}^{2}\| + \|2\lambda_{n}\varepsilon\|\left|\sum_{k}\frac{\partial \mathbf{N}_{n,j}}{\partial y_{k}}\right| +\right. \\
\left. \varepsilon^{2}|\mathbf{N}_{n,j}|\left|\sum_{k,l}\frac{\partial^{2}\mathbf{N}_{n,j}}{\partial y_{k}\partial y_{l}}\right|
\|\varepsilon\lambda_{n}\|\left|\sum_{k}\frac{\partial\mathbf{N}_{n,j}}{\partial y_{k}}\right| + \varepsilon^{2}\left|\frac{\partial\mathbf{N}_{n,j}}{\partial y_{k}}\right|^{2}  \right).
\end{multline}

Now, as $\mathbf{N}$ and all of its spatial derivatives up to and including $p=2$ are bounded by $M$ by~\cref{eq:deriv_bounds}, we write

\begin{align}
\left\|\frac{\mathrm{d}^{3}y(t)}{\mathrm{d}t^{3}}\right\| &\le \varepsilon\Delta T^{3}M\max_{x\in\mathbb{R}}P(\eta)\|\lambda_{n}^{2} + 3\lambda_{n}\varepsilon M + \varepsilon^{2}M^{2}\| \\
&\le \varepsilon\Delta T^{3}M\max_{x\in\mathbb{R}}P(\eta)\|\lambda_{n} + C_{f}\varepsilon M\|^{2},
\end{align}

where $C_{f}$ is a positive constant. We will now assume that $|\lambda_{n}|\neq 0$ as we are interested in the sup-norm of these values, which is nonzero when near-resonances are included. The directly resonant case has been treated by \cite{Haut_Wingate_14}. Then we must consider two possibilities. Firstly, if $|\lambda_{n}|\leq 1$, then we define some constant, $K_{1}$,

    \begin{equation}
        K_{1} = (1+C_{f}\varepsilon M)^{2}.
    \end{equation}
    
    If $|\lambda_{n}|>1$, the binomial theorem yields:
    
    \begin{align*}
        (|\lambda_{n}|+C_{f}\varepsilon M)^{p} &= \sum_{j=0}^{p}{{p}\choose{j}}(|\lambda_{n}|)^{p-j}(C_{f}\varepsilon M)^{j},\\
                                                     &\leq \sum_{j=0}^{p}{{p}\choose{j}}(|\lambda_{n}|)^{p}(C_{f}\varepsilon M)^{j},\\
                                                     &=|\lambda_{n}|^{p}\sum_{j=0}^{p}{{p}\choose{j}}(C_{f}\varepsilon M)^{j}, \\
                                                     &=|\lambda_{n}|^{p}K_{2}.
    \end{align*}

    And then we may write
    
    \begin{equation}
    (|\lambda_{n}| + \varepsilon\Delta TM)^{2} \leq \max(K_{1}, |\lambda_{n}|^{2}K_{2}).
    \end{equation}
    
    As for the Rotating Shallow Water Equations there must always be a value of $\lambda_{n}$ which is strictly greater than one, we shall assume that it is the second value which is the maximum. We now let $C = C_{t}K$. Finally, we bound the nonlinear term in the same fashion as \cref{lemma:avg_err}, where the fact that:

\begin{eqnarray*}
M & = & \sup_{\mathbf{x}\in D}\sup_{0\leq t\leq L}\left\Vert \mathbf{f}\left(\mathbf{x},t\right)\right\Vert \\
 & = & \sup_{\mathbf{x}\in D}\sup_{0\leq t\leq L}\left\Vert \sum_{n}\Delta T e^{i\Delta T \lambda_{n}t}\mathbf{N}_{n}\left(\mathbf{y}\right)\right\Vert \\
  & \leq & \sup_{\mathbf{x}\in D}\sup_{0\leq t\leq L}\left(\sum_{n}\left\Vert \mathbf{N}_{n}\left(\mathbf{y}\right)\right\Vert \right) < \infty,
\end{eqnarray*}
    completes the proof by providing an upper bound for the nonlinear operator as in \cref{lemma:avg_err}. This provides a bound for the error due to timestepping which does not depend directly on the solution, but rather on the general properties of the nonlinearity, in particular the triadic interactions.
    \end{proof}

With these two lemmas describing our primary sources of error, we will seek a bound on the error in the APinT algorithm and use this to show convergence.  From \cref{lemma:ts_err}, it follows that the timestepping error depends on:

\begin{equation}
    E\left(\varepsilon,\eta,\lambda_{n},\Delta T\right) \sim \varepsilon\Delta T\lambda_{n}^{2}\left(\frac{1}{\eta}\int_{0}^{\eta}\rho\left(\frac{s}{\eta}\right)e^{i\lambda_{n}\Delta Ts}ds\right).
\end{equation}

We define the following term which describes the filtering, independent of the gain due to the scale separation and the coarse timestep, and which is the key insight into understanding how to regularise an oscillatory problem over a finite time interval.

\begin{equation}
    \Lambda(\eta) = \max_{x\in\mathbb{R}}\lambda_{n}^{2}\int_{0}^{1}\rho(s)e^{i\lambda_{n}\eta\Delta Ts}\,\mathrm{d}s.
    \label{eq:big_lambda}
\end{equation}

$\Lambda(\eta)$ provides a measure of the extent to which the averaging integral mitigates the numerical stiffness. Recall that when the maximum $\lambda_{n}$ is large, as it is for highly oscillatory problems, it contributes to large gradients on the right-hand side requiring a small numerical timestep. In contrast, the integral component tends to zero as $\lambda_{n}$ gets large, and does so superlinearly because of the smooth kernel, $\rho(s)$ \cite{Haut_Wingate_14}. This term is then where we see precisely how the averaging procedure filters the fast oscillations, causing $\Lambda(\eta)$ to achieve a lower magnitude than $\lambda_{n}^{2}$ does on its own and therefore reducing the numerical stiffness.

In seeking a bound on the error in the timestepping, it is necessary to bound this term for some particular averaging kernel, $\rho(s)$. The choice of averaging kernel affects the error bounds through this function. $\Lambda(\eta)$ is bounded and tends rapidly to zero as $\eta \to \infty$ (\textit{q.v.}~\cref{sect:optimal_averaging}).

With this in mind, we now prove \cref{thm:coarse} which bounds the error committed by the coarse timestepping as compared to the fine. This will later allow us to prove error bounds on APinT subject to finite timescale separation.

\begin{theorem}
    Let $\Delta T$ denote the coarse timestep for a second order numerical method. We assume a finite scale separation on the order of $\varepsilon$. For an averaging window of length $\eta$, the total error in the coarse timestepping for the APinT algorithm is bounded by:

    \begin{equation}
        \|\mathbf{x}(t) - \mathbf{y}_{\Delta T}(t)\| \le M\varepsilon\Delta T\left((C_{0} + C_{1}\varepsilon)\eta + D_{1}(\Delta T)^{3}\Lambda(\eta)\right),
    \end{equation}
    where $M$ is the sup-norm over the nonlinear operator as in lemmas~\ref{lemma:avg_err} and \ref{lemma:ts_err} and $C_{0}$, $C_{1}$, and $D_{1}$ are finite constants.

    \label{thm:coarse}
\end{theorem}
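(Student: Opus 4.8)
The plan is to obtain the theorem as a triangle-inequality combination of the two lemmas already established, using the exact averaged solution $\yy(t)$ of \cref{eq:coarse} as the intermediate between the fine solution $\vx(t)$ and its numerical approximation $\yy_{\Delta T}(t)$. First I would write
\begin{equation*}
\|\vx(t)-\yy_{\Delta T}(t)\|\le \|\vx(t)-\yy(t)\| + \|\yy(t)-\yy_{\Delta T}(t)\|,
\end{equation*}
which splits the total coarse error into an averaging error and a time-stepping error. Because $\vx$, $\yy$, and $\yy_{\Delta T}$ satisfy exactly the hypotheses used to state \cref{lemma:avg_err} and \cref{lemma:ts_err} --- $\ff$ Lipschitz and bounded by $M$ on $D$, with $\yy(t)\in D$ and the derivative bounds \cref{eq:deriv_bounds} --- both lemmas apply directly, and the remaining work is purely organizational.

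For the averaging term I would invoke \cref{lemma:avg_err}, giving $\|\vx-\yy\|\le M\big(1+\tfrac12\beta\varepsilon\big)\varepsilon\Delta T\,\eta$. Extracting the common prefactor $M\varepsilon\Delta T$ leaves $(1+\tfrac12\beta\varepsilon)\eta$, so that $C_0=1$ and $C_1=\tfrac12\beta$ (with $\beta$ the Lipschitz constant) reproduce the $(C_0+C_1\varepsilon)\eta$ contribution. Finiteness of these constants is immediate from the boundedness of $\ff$.

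The one genuinely non-mechanical step is the time-stepping term, where I would show that the integral factor appearing in \cref{lemma:ts_err} is precisely the filtering functional $\Lambda(\eta)$ of \cref{eq:big_lambda}. The substitution $u=s/\eta$ converts $\tfrac1\eta\int_0^\eta\rho(s/\eta)\,e^{i\lambda_n\Delta T s}\,\mathrm{d}s$ into $\int_0^1\rho(u)\,e^{i\lambda_n\eta\Delta T u}\,\mathrm{d}u$; multiplying by $\lambda_n^2$ and taking the supremum over the near-resonant spectrum then gives exactly $\Lambda(\eta)$. Feeding this into \cref{lemma:ts_err} produces a bound of the form $C M\varepsilon(\Delta T)^{3}\Lambda(\eta)$, and factoring out the same $M\varepsilon\Delta T$ leaves the residual powers of $\Delta T$ multiplying $\Lambda(\eta)$ with constant $D_1=C$. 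Adding the two contributions and collecting the shared $M\varepsilon\Delta T$ gives the stated bound. I expect the only delicate points to be bookkeeping: keeping the $\Delta T$ inside the exponent consistent with the definition of $\Lambda(\eta)$ through the change of variables, and checking that the supremum over the wavenumber triads indexing $\lambda_n$ commutes with pulling the scalar $\Delta T$ and $\varepsilon$ prefactors outside --- neither of which presents a real obstacle once the identification with $\Lambda(\eta)$ is in place.
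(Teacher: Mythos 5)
Your proposal is correct and follows essentially the same route as the paper: the identical triangle-inequality split through the exact averaged solution $\mathbf{y}(t)$, with \cref{lemma:avg_err} bounding the averaging term and \cref{lemma:ts_err} (rewritten via the substitution $s\mapsto s/\eta$ as $\Lambda(\eta)$) bounding the time-stepping term. Your explicit identification of $C_{0}$, $C_{1}$ with the Lipschitz data is slightly more detailed than the paper's, which simply absorbs them into generic constants.
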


\begin{proof}

    By the triangle inequality, we may write:

\begin{eqnarray*}
\left\Vert \mathbf{x}\left(t\right)-\mathbf{y}_{\Delta T}\left(t\right)\right\Vert  & = & \left\Vert \mathbf{x}\left(t\right)-\mathbf{y}\left(t\right)+\mathbf{y}\left(t\right)-\mathbf{y}_{\Delta T}\left(t\right)\right\Vert, \\
 & \leq & \left\Vert \mathbf{x}\left(t\right)-\mathbf{y}\left(t\right)\right\Vert +\left\Vert \mathbf{y}\left(t\right)-\mathbf{y}_{\Delta T}\left(t\right)\right\Vert.
\end{eqnarray*}

\cref{lemma:avg_err} is used to bound the first term, i.e.:
    
    \begin{equation}
        \|\mathbf{x}(t) - \mathbf{y}(t)\| \le M(C_{0} + C_{1}\varepsilon)\varepsilon\Delta T \eta.
        \label{eq:first_bound}
    \end{equation}
    
    Applying~\cref{lemma:ts_err} and~\cref{eq:big_lambda} to the second term yields:

    \begin{align}
        \|\mathbf{y}(t) - \mathbf{y}_{\Delta T}(t)\| &\le MCC_{1}(\Delta T)^{3}\varepsilon\Lambda(\eta),\\
        &\le MD_{1}(\Delta T)^{3}\varepsilon\Lambda(\eta),
        \label{eq:second_bound}
    \end{align}
    where $\Lambda(\eta)$ is bounded independently of $\lambda_{n}$ for any averaging window length, $\eta$. Combining the bounds in equations~\cref{eq:first_bound} and \cref{eq:second_bound} gives the theorem as desired.

\end{proof}

\subsection{Proof of Parareal Convergence}
\label{sect:conv}

We may now derive error bounds for the Parareal iteration on finite systems of ODEs, given in~\cref{eq:Parareal}. Using our improved error bound for the coarse solver which holds for finite $\varepsilon$, we modify the proof given in \cite{Haut_Wingate_14}, which held only as $\varepsilon\to 0$. For consistency we define several operators following \cite{Haut_Wingate_14}. Let $\tilde{\varphi}_{\Delta T}(\cdot)$ be the evolution operator associated with numerically solving the slow equation using an $\mathcal{O}(p)$ method, such that $\tilde{\varphi}_{\Delta T}(\cdot)$ is a numerical approximation of $\overline{\varphi}_{\Delta T}(\cdot)$. Furthermore, let $\varphi_{\Delta T}(\cdot)$ denote the evolution operator for the fine solution. We then define:

\begin{equation}
    \mathcal{E}_{\varphi,\overline{\varphi}}(\cdot) = \varphi_{\Delta T}(\cdot) - \overline{\varphi}_{\Delta T}(\cdot);\quad \mathcal{E}_{\overline{\varphi},\tilde{\varphi}}(\cdot) = \overline{\varphi}_{\Delta T}(\cdot) - \tilde{\varphi}_{\Delta T}(\cdot),
\end{equation}

Then, as in \cite{Bal_05}, \cite{Haut_Wingate_14}, and~\cref{sect:coarse} we make the following assumptions:

\begin{enumerate}
    \item
        The operators $\varphi(\cdot)$ and $\overline{\varphi}(\cdot)$ are uniformly bounded for $0\le t \le 1$:
        \begin{equation}
            \|\varphi_{t}(\mathbf{u}_{0})\|\le C\|\mathbf{u}_{0}\|\, , \quad \|\overline{\varphi}_{t}(\mathbf{u}_{0})\|\le C\|\mathbf{u}_{0}\|
            \label{eq:uniform_bound}
        \end{equation}
    \item
        The averaging method is accurate in the sense that:
        \begin{equation}
            \|\varphi_{t}(\mathbf{u}_{0}) - \overline{\varphi}_{t}(\mathbf{u}_{0})\| \le \varepsilon\Delta T\eta M(C_{1} + C_{2}\varepsilon)\|\mathbf{u}_{0}\|
            \label{eq:asymp_acc}
        \end{equation}
    \item
        The averaged evolution operator satisfies:
        \begin{equation}
            \|\overline{\varphi}_{\Delta T}(\mathbf{u}_{1}) - \overline{\varphi}_{\Delta T}(\mathbf{u}_{2})\| \le (1 + C\Delta T)\|\mathbf{u}_{1} - \mathbf{u}_{2}\|,
        \end{equation}
        and the numerical approximation to the evolution equation satisfies:
        \begin{equation}
            \|\tilde{\varphi}_{\Delta T}(\mathbf{u}_{1}) - \tilde{\varphi}_{\Delta T}(\mathbf{u}_{2})\| \le (1 + C\Delta T)\|\mathbf{u}_{1} - \mathbf{u}_{2}\|,
            \label{eq:num_evol_bound}
        \end{equation}
    \item
        Following \cref{lemma:avg_err} and \cref{lemma:ts_err} and~\cref{eq:uniform_bound}, the error operators satisfy:

        \begin{equation}
            \|\mathcal{E}_{\varphi,\overline{\varphi}}(\mathbf{u}_{1}) - \mathcal{E}_{\varphi,\overline{\varphi}}(\mathbf{u}_{2})\| \le \varepsilon\Delta T \eta M (C_{1} + C_{2}\varepsilon) \|\mathbf{u}_{1} - \mathbf{u}_{2}\|,
            \label{eq:err_op_asymp}
        \end{equation}
        and:
        \begin{equation}
            \|\mathcal{E}_{\overline{\varphi},\tilde{\varphi}}(\mathbf{u}_{1}) - \mathcal{E}_{\overline{\varphi},\tilde{\varphi}}(\mathbf{u}_{2})\| \le \Delta T^{3}\varepsilon\Lambda(\eta) M C \|\mathbf{u}_{1} - \mathbf{u}_{2}\|, \quad p \ge 1.
            \label{eq:err_op_ts}
        \end{equation}

\end{enumerate}

We have now quantified the major sources of error in the coarse timestepping which will affect the convergence of Parareal. The following proof of the convergence follows directly from these bounds.

\begin{theorem}
    Subject to the above assumptions, the error, $\mathbf{u}(T_{n})-\mathbf{U}_{n}^{k}$, after the $k$-th Parareal iteration is bounded by:

    \begin{equation}
        \|\mathbf{u}(T_{n}) - \mathbf{U}_{n}^{k}\| \le MC_{g}\left(C_{1}\Delta T^{3}\varepsilon\Lambda(\eta) + (C_{2} + C_{3}\varepsilon)\varepsilon\eta\right)^{k+1}\|\mathbf{u}_{0}\|.
    \end{equation}
    \label{thm:apint_conv}
\end{theorem}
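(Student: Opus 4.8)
The plan is to reduce the double Parareal recursion to a scalar two-index recurrence and then solve it by induction, exactly as in the $\varepsilon\to 0$ argument of \cite{Haut_Wingate_14} and \cite{Bal_05}, but with the single coarse-propagator error constant replaced by the finite-$\varepsilon$ estimate furnished by \cref{thm:coarse}. First I would combine the two error operators: writing $\mathcal{E}_{\varphi,\tilde{\varphi}} = \mathcal{E}_{\varphi,\overline{\varphi}} + \mathcal{E}_{\overline{\varphi},\tilde{\varphi}}$ and adding the Lipschitz bounds \cref{eq:err_op_asymp} and \cref{eq:err_op_ts} yields a single constant
\[
\alpha := M\bigl(C_{1}\Delta T^{3}\varepsilon\Lambda(\eta) + (C_{2} + C_{3}\varepsilon)\varepsilon\eta\bigr),
\]
which is precisely the base of the claimed geometric factor and for which $\|\mathcal{E}_{\varphi,\tilde{\varphi}}(\mathbf{u}_{1}) - \mathcal{E}_{\varphi,\tilde{\varphi}}(\mathbf{u}_{2})\| \le \alpha\|\mathbf{u}_{1} - \mathbf{u}_{2}\|$; the same combination with the accuracy bound \cref{eq:asymp_acc} and the uniform bound \cref{eq:uniform_bound} controls the \emph{magnitude} $\|\mathcal{E}_{\varphi,\tilde{\varphi}}(\mathbf{u}_{0})\| \le \alpha C\|\mathbf{u}_{0}\|$ needed for the initial sweep.

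Next I would set $e_{n}^{k} = \mathbf{u}(T_{n}) - \mathbf{U}_{n}^{k}$ and use the fact that the true solution advances exactly under the fine propagator, $\mathbf{u}(T_{n}) = \varphi_{\Delta T}(\mathbf{u}(T_{n-1}))$, together with the iteration \cref{eq:Parareal} written with the numerical coarse solver $\tilde{\varphi}$ in place of $\overline{\varphi}$. Adding and subtracting $\tilde{\varphi}_{\Delta T}(\mathbf{u}(T_{n-1}))$ rearranges the update into
\[
e_{n}^{k} = \bigl[\mathcal{E}_{\varphi,\tilde{\varphi}}(\mathbf{u}(T_{n-1})) - \mathcal{E}_{\varphi,\tilde{\varphi}}(\mathbf{U}_{n-1}^{k-1})\bigr] + \bigl[\tilde{\varphi}_{\Delta T}(\mathbf{u}(T_{n-1})) - \tilde{\varphi}_{\Delta T}(\mathbf{U}_{n-1}^{k})\bigr].
\]
Taking norms and applying the $\mathcal{E}$-Lipschitz bound above to the first bracket and the $(1+C\Delta T)$-Lipschitz bound \cref{eq:num_evol_bound} to the second gives the scalar recurrence
\[
\|e_{n}^{k}\| \le \alpha\|e_{n-1}^{k-1}\| + (1+C\Delta T)\|e_{n-1}^{k}\|, \qquad e_{0}^{k} = 0,
\]
where $e_{0}^{k}=0$ because the initial datum $\mathbf{U}_{0}^{k}=\mathbf{u}_{0}$ is exact at every iteration.

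Finally I would solve this recurrence. Unrolling in $n$ gives $\|e_{n}^{k}\| \le \alpha\sum_{m=0}^{n-1}(1+C\Delta T)^{n-1-m}\|e_{m}^{k-1}\|$; at $k=0$ the same relation run against the magnitude estimate produces the geometric sum $\|e_{n}^{0}\| \le \alpha C\|\mathbf{u}_{0}\|\sum_{m=0}^{n-1}(1+C\Delta T)^{m}$, and an induction on $k$ then yields a bound of the shape
\[
\|e_{n}^{k}\| \le \binom{n}{k}\alpha^{k+1}(1+C\Delta T)^{\,n-k}\,\frac{C}{\Delta T}\,\|\mathbf{u}_{0}\|.
\]
Because the coarse grid satisfies $N\Delta T = 1$, every $n$-dependent factor is uniformly bounded, $(1+C\Delta T)^{n}\le e^{C}$ and $\binom{n}{k}\le 2^{N}$, and for a fixed grid so is the single residual $1/\Delta T$ left over from the initial sweep; collecting these into one constant $C_{g}$ leaves exactly the geometric bound $\|\mathbf{u}(T_{n}) - \mathbf{U}_{n}^{k}\| \le MC_{g}\,\alpha^{k+1}\|\mathbf{u}_{0}\|$ in $k$, so that convergence (indeed superlinear decay in $k$) follows once $\eta$ and $\Delta T$ are chosen to drive the base below unity. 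The main obstacle is precisely this last step: correctly solving the two-index recurrence and verifying that the combinatorial factor $\binom{n}{k}$, the amplification $(1+C\Delta T)^{n}$, and the leftover $\Delta T$ powers from the $k=0$ sweep all remain controlled so that the clean $(k+1)$-th power survives the bookkeeping and can be absorbed, without $n$- or $k$-dependence, into $C_{g}$.
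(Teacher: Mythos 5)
Your decomposition is exactly the paper's: the identity $e_{n}^{k} = (\tilde{\varphi}_{\Delta T}(\mathbf{u}(T_{n-1})) - \tilde{\varphi}_{\Delta T}(\mathbf{U}_{n-1}^{k})) + (\mathcal{E}_{\varphi,\overline{\varphi}}(\mathbf{u}(T_{n-1})) - \mathcal{E}_{\varphi,\overline{\varphi}}(\mathbf{U}_{n-1}^{k-1})) + (\mathcal{E}_{\overline{\varphi},\tilde{\varphi}}(\mathbf{u}(T_{n-1})) - \mathcal{E}_{\overline{\varphi},\tilde{\varphi}}(\mathbf{U}_{n-1}^{k-1}))$, the same base case, and the same four assumptions; merging the two error operators into one Lipschitz constant $\alpha$ is purely notational. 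Where you genuinely diverge is in resolving the two-index recurrence: the paper fixes $k$, uses the induction hypothesis as a bound uniform in $n$ on the $(k-1)$-level error, and closes with the discrete Gronwall inequality, which yields $C_{g}\sim e^{C(T_{n}-T_{0})}-1$ independent of $\Delta T$ at the cost of a merely geometric (rather than superlinear) factor. You instead unroll fully in $n$ and obtain the sharper binomial form $\binom{n}{k}\alpha^{k+1}(1+C\Delta T)^{n-k}$, which is the classical Gander--Vandewalle-type estimate and in principle buys the $1/k!$ superlinear decay you allude to.

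The flaw is in how you then close the bookkeeping. Bounding $\binom{n}{k}\le 2^{N}=2^{1/\Delta T}$ and declaring the leftover $1/\Delta T$ from the $k=0$ sweep ``fixed for a fixed grid'' absorbs quantities that blow up as $\Delta T\to 0$ into $C_{g}$; the resulting constant is exponentially large in $1/\Delta T$ and the bound becomes vacuous precisely in the regime of interest, whereas the theorem (and the paper's Gronwall route) delivers a $C_{g}$ uniform in the grid. The repair is already implicit in the assumptions you cite: both \cref{eq:err_op_asymp} and \cref{eq:err_op_ts} carry an explicit factor of $\Delta T$ (the true combined Lipschitz constant is $M\Delta T\bigl(C_{1}\Delta T^{2}\varepsilon\Lambda(\eta)+(C_{2}+C_{3}\varepsilon)\varepsilon\eta\bigr)$, not your $\alpha$, which silently drops the $\Delta T$ from the averaging term so as to match the theorem's base). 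Pairing those $k+1$ powers of $\Delta T$ with the binomial coefficient gives $\binom{n}{k}\Delta T^{k}\le (n\Delta T)^{k}/k!\le 1/k!$, which is bounded uniformly in $n$, $k$, and $\Delta T$ and leaves one spare power of $\Delta T$ to cancel the $1/\Delta T$ residue from the initial sweep. With that substitution your argument closes correctly and is in fact sharper than the paper's; as written, the last step does not yield the stated $C_{g}$.
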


\begin{proof}
The proof is by induction on $k$. When $k=0$:

    \begin{align*}
        \|\mathbf{u}(T_{n})-\mathbf{U}_{n}^{n}\| &= \|\varphi_{\Delta T}(\mathbf{u}_{0}) - \tilde{\varphi}_{\Delta T}(\mathbf{u}_{0})\| \\
        & \le \|\varphi_{\Delta T}(\mathbf{u}_{0}) - \overline{\varphi}_{\Delta T}(\mathbf{u}_{0})\| + \|\overline{\varphi}_{\Delta T}(\mathbf{u}_{0}) - \tilde{\varphi}_{\Delta T}(\mathbf{u}_{0})\| \\
        & \le M((C_{1} + C_{2}\varepsilon)\varepsilon\Delta T \eta + C_{3}\Delta T^{2})\|\mathbf{u}_{0}\|,
    \end{align*}

    where we have used~\cref{eq:asymp_acc}, which bounds the error induced by the averaging procedure, to bound the first term and \cref{lemma:ts_err}, which governs the timestepping error, for the second. Now assume that:

    \begin{equation}
        \|\mathbf{u}(T_{n})-\mathbf{U}_{n}^{k-1}\| \le (\Delta T + \varepsilon)\left(C_{1}\Delta T^{3}\varepsilon\Lambda(\eta) + (C_{2}+C_{3}\varepsilon)\varepsilon\eta\right) \|\mathbf{u}_{0}\|.
    \end{equation}

    We may then write the Parareal iteration,~\cref{eq:Parareal} in the following form, using~\cref{eq:err_op_asymp} and \cref{eq:err_op_ts}:

    \begin{multline}
        \mathbf{u}(T_{n}) - \mathbf{U}_{n}^{k} = (\tilde{\varphi}_{\Delta T}(\mathbf{u}(T_{n-1})) - \tilde{\varphi}_{\Delta T}(\mathbf{U}_{n-1}^{k})) + \\
        (\mathcal{E}_{\varphi,\overline{\varphi}}(\mathbf{u}(T_{n-1})) - \mathcal{E}_{\varphi,\overline{\varphi}}(\mathbf{U}_{n-1}^{k-1})) + (\mathcal{E}_{\overline{\varphi},\tilde{\varphi}}(\mathbf{u}(T_{n-1})) - \mathcal{E}_{\overline{\varphi},\tilde{\varphi}}(\mathbf{U}_{n-1}^{k-1}))
    \end{multline}

    By directly substituting equations~\cref{eq:num_evol_bound}, \cref{eq:err_op_asymp}, and \cref{eq:err_op_ts}, we have:

    \begin{align*}
        \|\mathbf{u}(T_{n}) - \mathbf{U}_{n}^{k}\| &\le (1+C\Delta T)\|\mathbf{u}(T_{n-1})-\mathbf{U}_{n-1}^{k}\| + \\
        &\qquad M\left(C_{1}\Delta T^{3}\varepsilon\Lambda(\eta) + (C_{2} + C_{3}\varepsilon)\varepsilon\Delta T\eta\right)\|\mathbf{u}(T_{n-1})-\mathbf{U}_{n-1}^{k-1}\| \\
        & \le (1+C\Delta T)\|\mathbf{u}(T_{n-1})-\mathbf{U}_{n-1}^{k}\| + \\
        &\qquad M\Delta T\left(C_{1}\Delta T^{2}\varepsilon\Lambda(\eta) + (C_{2} + C_{3}\varepsilon)\varepsilon\eta\right)^{k+1}\|\mathbf{u}_{0}\|.
    \end{align*}

    Finally, application of the discrete Gronwall inequality gives:

    \begin{align*}
        \|\mathbf{u}(T_{n}) - \mathbf{U}_{n}^{k}\| &\le \left(e^{C(T_{n}-T_{0})}-1\right)M \left(C_{1}\Delta T^{2}\varepsilon\Lambda(\eta) + (C_{2} + C_{3}\varepsilon)\varepsilon\eta\right)^{k+1}\|\mathbf{u}_{0}\|\\
         &\le MC_{g}\left(C_{1}\Delta T^{2}\varepsilon\Lambda(\eta) + (C_{2} + C_{3}\varepsilon)\varepsilon\eta\right)^{k+1}\|\mathbf{u}_{0}\|.
    \end{align*}
\end{proof}

\cref{thm:apint_conv} is one of the key contributions of this work. Using the understanding of near-resonance and the result of \cref{thm:coarse}, it generalises the proof given by 
\cite{Haut_Wingate_14} of convergence for
the asymptotic limit as $\varepsilon \to 0$ to finite $\varepsilon$. This is a significant improvement as for many physical
applications such as weather and climate modelling $\varepsilon$
remains finite. As the averaging window length, $\eta$,
may be freely chosen we may select an optimal $\eta$
for a wide range of $\varepsilon$
subject to the other constants and choice of $\Delta T$
such that the method is convergent. We discuss this in the next section.


\subsection{Convergence for any $\varepsilon$}
\label{sect:conv_any_eps}

Given \cref{thm:apint_conv} we are in finally in a position to discuss convergence for any timescale separation. For the APinT algorithm to converge, we require that:

\begin{equation}
    C_{1}\Delta T^{3}\varepsilon\Lambda(\eta) + C_{2}\varepsilon\eta + C_{3}\varepsilon^{2}\eta < 1,
\end{equation}

We are then left with the problem of choosing an appropriate averaging window length, $\eta$, depending on the degree of scale separation, $\epsilon$, and the filtered contribution of the triads, $\Lambda(\eta)$. In the interest of demonstrating that one exists, we assume the scaling (for example):

\begin{equation}
    \eta = \frac{\Delta T}{\varepsilon^{s}},\quad 0<s<1.
    \label{eq:eta_scaling}
\end{equation}

We then have:

\begin{equation}
    C_{1}\Delta T^{3}\varepsilon\Lambda\left(\frac{\Delta T}{\varepsilon^{s}}\right) + C_{2}\varepsilon^{1-s}\Delta T + C_{3}\varepsilon^{2-s}\Delta T < 1,
\end{equation}
as $\varepsilon \to 0$, our error also decreases for any value of the power $s$. $\Lambda\left(\frac{\Delta T}{\varepsilon^{s}}\right)$ is bounded, so as $\varepsilon \to 1$, all terms remain bounded and we may choose our coarse timestep accordingly to ensure convergence. This means that the method proposed here may be applied across the full range of $\varepsilon \in (0, 1]$ with only a change of averaging window length, which allows convergence for physical problems where the time scale separation may change throughout the computation. This is in contrast to the proof in the limit \cite{Haut_Wingate_14} which proved convergence only for $\varepsilon\to 0$.

\section{The One-Dimensional Rotating Shallow Water Equations}
\label{sect:rswe}

We now consider an example, using the one-dimensional rotating shallow water equation as a test-case, as did \cite{Haut_Wingate_14}. Let the unknown vector be:

\begin{equation}
    \mathbf{u}(t,x) = \left(v_{1}(t,x), v_{2}(t,x), h(t,x)\right)^{T}.
\end{equation}

We then write the linear and nonlinear operators in the full model~\cref{eq:full_eqn} as:

\begin{equation}
    \mathcal{L} = \left(\begin{array}{ccc}0 & -1 & F^{-1/2}\partial_{x} \\ 1 & 0 & 0 \\ F^{-1/2}\partial_{x} & 0 & 0\end{array}\right); \quad \mathcal{N}(\mathbf{u},\mathbf{u}) = \left(\begin{array}{c}v_{1}(v_{1})_{x}\\v_{1}(v_{2})_{x}\\(hv_{1})_{x}\end{array}\right).
\end{equation}
for some constant, $F\in\mathbb{R}$. The corresponding eigenvalues are:

\begin{equation}
    \omega_{k}^{\alpha} = \alpha\sqrt{1+F^{-1}k^{2}},\quad\alpha=-1,0,+1.
    \label{eq:1d_eigenval}
\end{equation}

In general, as $\varepsilon \to 0$ we expect that $\Lambda(\eta) \to 0$ as well due to cancellation of oscillations in the integral\cite{Embid_Majda_96}. As discussed in \cref{sect:bounds}, oscillatory stiffness arises due to the magnitude of the gain term outside of the integral, which is large for highly oscillatory systems. The integral itself, however, is bounded from above by one, and achieves this value only for directly resonant triads (\textit{cf.} \cref{sect:triad}), where the gain is zero. As the distance of resonance (i.e. the magnitude of $|\omega_{k}^{\alpha}-\omega_{k_{1}}^{\alpha_{1}} - \omega_{k_{2}}^{\alpha_{2}}|$) increases, the integral tends to zero as well (and does so faster with larger $\eta$).

The choice is then for a given degree of scale separation, $\varepsilon$, to choose an $\eta$ which mitigates the stiffness sufficiently to allow the necessary coarse timestep, while retaining as much fidelity to the full equations as possible (\textit{cf.} \cref{lemma:avg_err}, where the averaging error is proportional to $\eta$). We shall deal with the practical implications of the form of $\Lambda(\eta)$ in \cref{sect:optimal_averaging}.

\subsection{The Optimally-Averaged Slow and Fast Solutions}

In order to illustrate the slow averaged solution over which the timestepping is performed and its relation to the full solution,~\cref{fig:timelapse_fast} compares the slow, full and true solutions for the stiff case where $\varepsilon=0.01$. The spatio-temporal oscillations are very rapid in the stiff case, which is the source of the timestep limitation by the CFL condition. However, the slow solution over which the timestepping is performed lacks these rapid oscillations and so permits the large timestep.

Spatio-temporal oscillations from an initially stationary Gaussian height field are shown on a domain which is spatially periodic, i.e. the top and bottom boundaries of the plots wrap around. The decay of the height field into waves travelling in opposite directions is visible in~\cref{fig:timelapse_fast}. The optimal averaging window (\textit{q.v.}~\cref{sect:optimal_averaging}) was applied, and convergence to single precision was obtained in six iterations.

\begin{figure}[H]
    \centering
    \includegraphics[scale=0.55]{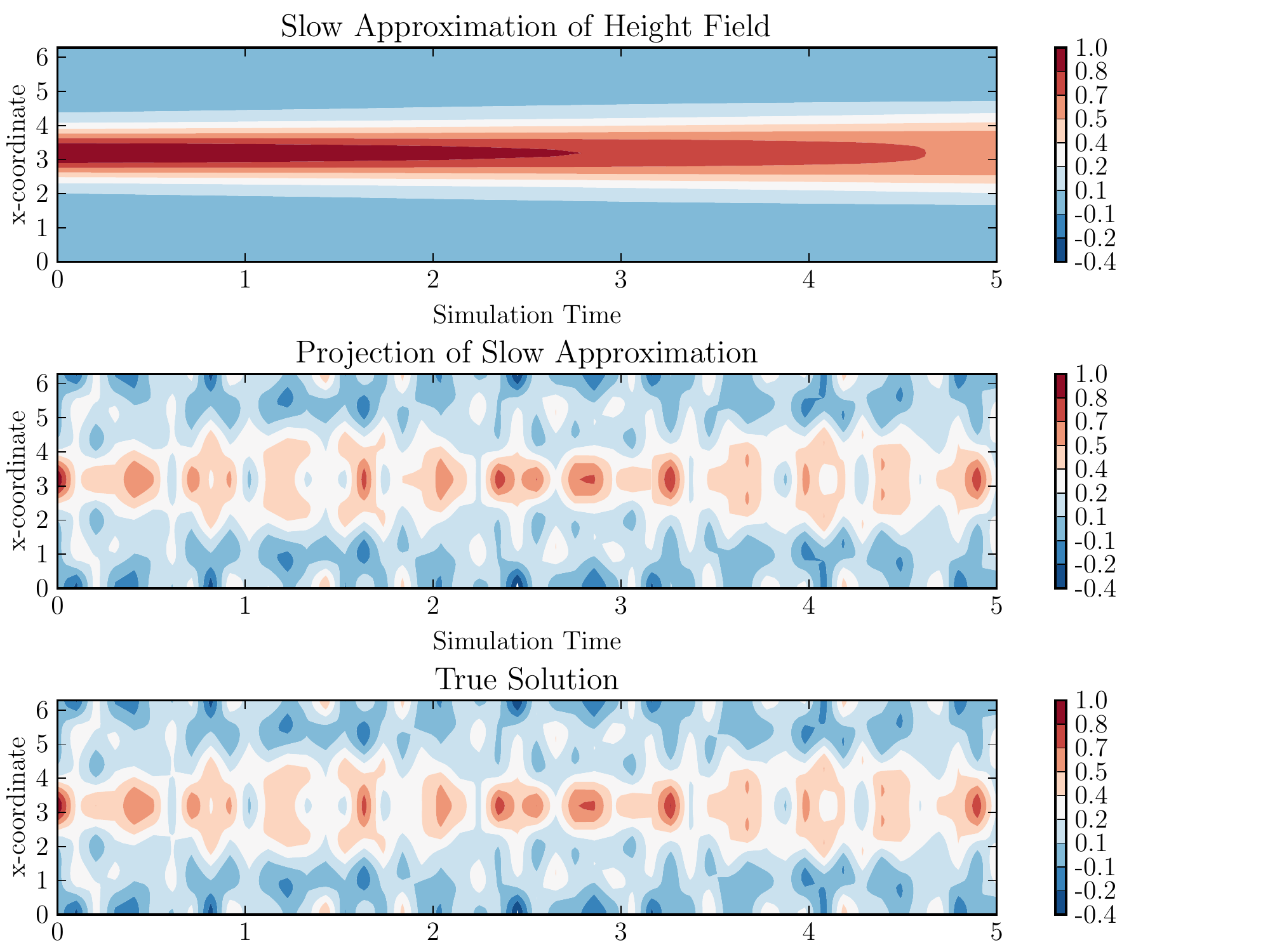}
    \caption{A comparison of the solution derived from the averaging method with the `true' solution. All three plots show spatio-temporal oscillations in the height field of the 1D RSWE with the time coordinate on the $x$-axis and the spatial coordinate on the $y$-axis. The top plot shows the slow approximation of the height field (the third component of $\mathbf{\overline{u}}$). The timestepping is performed over this slower quantitywith decreased oscillatory stiffness and therefore an increase in timestep. The middle plot is the projection of this quantity back into normal space by the matrix exponential, $e^{\tau\mathcal{L}}$. This is the coarse solution which is used after the first coarse solve. The quality of this when compared to the solution computed with the fine solver shown in the last plot (to which the APinT algorithm converges) is what allows rapid convergence of Parareal. In this example, $\varepsilon=0.01$ and the averaging window, $\eta = 1.0$, which is optimal for this problem..}
    \label{fig:timelapse_fast}
\end{figure}

\subsection{Numerical Results on the 1-D RSWE}
\label{sec:results}

In this section we present numerical results for the one-dimensional rotating shallow water equations which build on those presented in \cite{Haut_Wingate_14}. \Cref{fig:phi} shows the norm of the coarse error, i.e. $\|\mathbf{x}(t) - \mathbf{y}(t)\|_{2}$ computed relative to the fine timestep versus the width of the averaging window, $T_{0}$, which denotes the numerical choice of $\eta$, where $0\le t\le 1$, $\delta t = 2\mathrm{e}{-4}$, and the spatial resolution is $N_{x} = 64$. This spatial resolution and fine timestepping regime were found to be within the asymptotic range of the timestepping. A second-order Strang splitting method was used for both the coarse and fine solves. The initial flow was stationary with a Gaussian height field.

For the smallest $\varepsilon$ the asymptotic behaviour is well approximated, as the fidelity of the coarse timestepping increases as the averaging window increases. This is consistent with the behaviour described in~\cref{sect:triad}, where the theory predicts that $\eta\to\infty$ as $\varepsilon\to 0$. However, for larger $\varepsilon$ such as the two cases shown, there is a clear optimal size for the averaging window to take, i.e. the minimum in the red and green curves in~\cref{fig:phi}. The location but not the magnitude of this point is predicted by~\cref{eq:phi_min}. 

\Cref{thm:coarse} states that we should expect that outside of the small-$\varepsilon$ limit the iterative error should decrease with $k$ as in \cref{thm:apint_conv} and exhibit a minimum where the sum of the timestepping and averaging errors is smallest. In the case of $\varepsilon = 0.01$, which is near the limit as $\varepsilon\to 0$, we expect that for a large enough averaging window we will have optimal convergence, with no improvement in solution quality for a larger averaging window. The numerical results are then consistent with the theory developed in this paper.

\begin{figure}[H]
    \centering
    \includegraphics[scale=0.55]{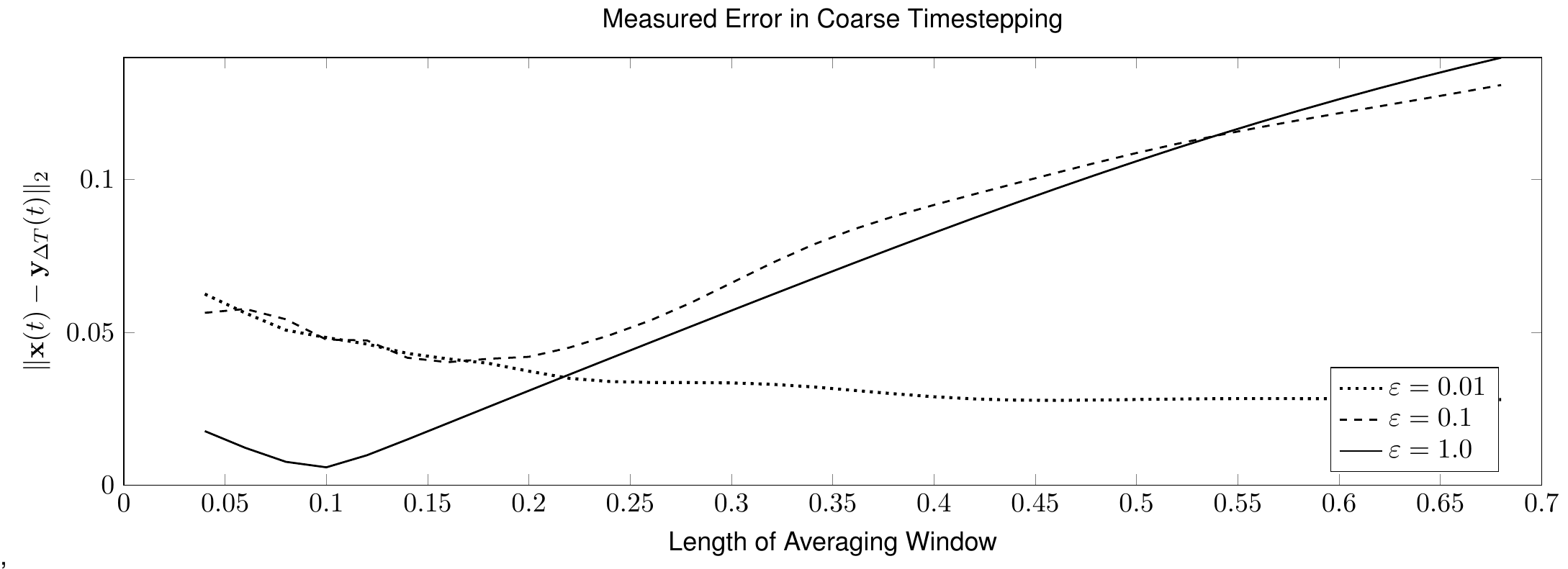}
    \caption{Computed coarse error for $\Delta T = 0.1$. This is a numerical estimate of the error corresponding to that in \cref{thm:coarse}, i.e. $\|\mathbf{x}(t) - \mathbf{y}_{\Delta T}(t)\|$, computed by brute-force comparison of the averaged coarse solution to a finely computed reference solution. Note the clear existence of an optimal averaging window for the case where $\varepsilon=1.0$, and the tendency towards the asymptotic theory, i.e. the error becoming inversely proportional to the averaging window length, $T0$, as $\varepsilon\to 0$.}
    \label{fig:phi}
\end{figure}

In practice, we seek a choice for $T_{0}$ for which the solution is non-stiff on an $\mathcal{O}(\Delta T)$ interval, and therefore as $\Delta T$ increases, so must the averaging window. Similarly, the oscillatory stiffness is proportional to $1/\varepsilon$, and so as $\varepsilon\to 0$ it is necessary to choose a longer averaging window, and therefore to apply stronger smoothing to the solution.

Comparing~\cref{fig:phi} to~\cref{fig:apint_conv}, which shows the iterative error in the APinT method after three iterations for the same parameters, the direct computation of the coarse timestepping error provides good qualitative agreement with the optimal choice of $\eta$ for the different values of $\varepsilon$. This is in direct agreement with the prediction of \cref{thm:apint_conv}.

As $T_{0}$ is taken smaller, instability is observed for all $\epsilon$. This corresponds to the explicit CFL limit being violated, as reducing the length of the averaging window increases the maximum wave speed in the solution. For large $T_{0}$, the iterative error roughly stabilises for finite $\varepsilon$. In the limit as $\eta$ is taken very large, the coarse timestepping corresponds to an incorrect equation (e.g. as in the QG equations for our example) being solved in a numerically stable fashion. The difference in the coarse and fine equations is sufficient to inhibit convergence, but does not violate the timestepping limit.

\begin{figure}[H]
    \centering
    \includegraphics[scale=0.55]{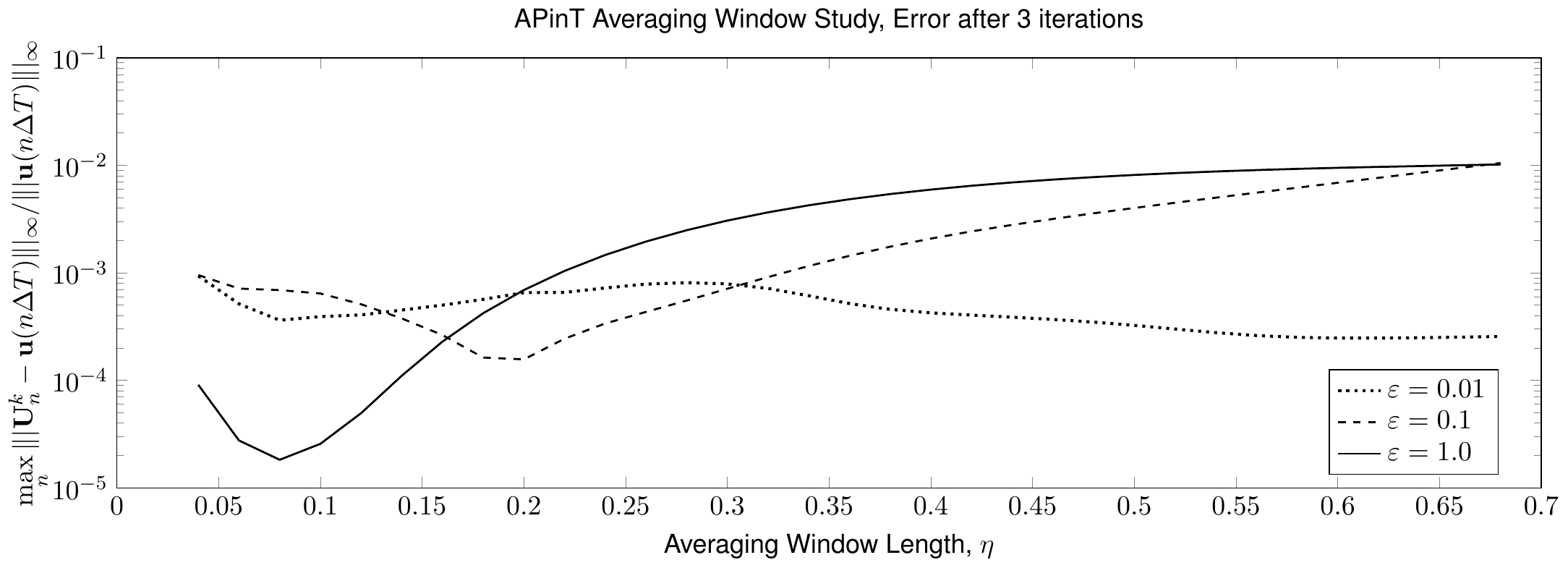}
    \caption{Iterative error in APinT with 1-D RSWE after three iterations for $\Delta T = 0.1$. Whereas~\cref{fig:phi} showed the measured total (i.e. timestepping plus averaging) error in the coarse timestepping, this figure shows the iterative error for a full Parareal solve of the RSWE after three iterations for the same computational conditions. Note that the behaviour with respect to variation of the averaging window and particularly the location of the optimal window length is well predicted by the brute force computation of the coarse timestepping error.}
    \label{fig:apint_conv}
\end{figure}

\subsection{Optimal Averaging for the 1-D RSWE}
\label{sect:optimal_averaging}

It was shown in~\cref{sect:conv_any_eps} that it is possible to choose the averaging window in such a way as to ensure convergence. Beyond doing this, we may choose the window optimally to obtain the fastest possible convergence (\textit{cf.}~\cref{fig:apint_conv}).

The reason we are able to describe the qualitative behaviour this way is a direct result of the Parareal algorithm and the sources of error present in it. The Parareal method consists of an initial approximation to the solution performed by the coarse solver which is accurate to within the coarse error predicted by \cref{thm:coarse}. This is followed by a series of parallel-in-time corrections which converge to the full solution, derived from the difference between the coarse and fine solutions. The closer the initial approximation is to the solution, the less correction is required to converge.

The optimal choice of $\eta$ may be written as an optimisation problem:

\begin{equation}
    \min_{\eta\in\mathbb{R}^{+}}\left(C_{1}\Delta T^{3}\varepsilon\Lambda(\eta) + (C_{2} + C_{3}\varepsilon)\varepsilon\eta\right),
    \label{eq:optim}
\end{equation}
for some as-yet unknown constants $C_{1}$, $C_{2}$, and $C_{3}$. It is here that the fact that both the timestepping and averaging errors are bounded proportionally to the norm of the nonlinear term, $M$, becomes serendipitous, as this constant may be somewhat non-optimal in practice. In seeking the location, but not the magnitude, of the minimum coarse error, the bound on the norm of the nonlinear operator plays no role. Seeking stationary points with respect to $\eta$, this then requires us to find $\eta$ such that:

\begin{equation}
    \frac{\mathrm{d}}{\mathrm{d}\eta}\max_{\epsilon_{\beta}}\max_{\mathcal{S}_{k,\alpha}^{\epsilon_{\beta}}}\frac{\left|\omega_{k}^{\alpha}-\omega_{k_{1}}^{\alpha_{1}} - \omega_{k_{2}}^{\alpha_{2}}\right|^{p}}{\varepsilon}\int_{0}^{1}\rho(s)e^{\frac{i\left|\omega_{k}^{\alpha}-\omega_{k_{1}}^{\alpha_{1}} - \omega_{k_{2}}^{\alpha_{2}}\right|\eta\Delta T}{\varepsilon}s}\,\mathrm{d}s + \frac{C_{2} + C_{3}\varepsilon}{C_{1}\Delta T^{3}} = 0.
    \label{eq:opt_diff}
\end{equation}

The result of~\cref{eq:opt_diff} is used to choose the optimal averaging window. This result captures the relationship of parameters such as the timestep and the scale separation on the optimal averaging, but relies on several unknown constants. If these constants $C_{n}$ were known, the optimal averaging window could be determined computationally. \Cref{eq:opt_diff} would then provide an approximation to the optimal window. Given some initial data of the type shown in~\cref{fig:apint_conv}, these constants may be fit by least-squares. Doing so fits the known trend to the known data, and permits the optimal averaging window to be recomputed `on the fly' in a computation.

Certain practical issues arise in the computation of $\eta$. Firstly, the computation of $\frac{\mathrm{d}\Lambda}{\mathrm{d}\eta}$ requires all triads to be investigated, i.e. the maximum is taken over the set of all near-resonant sets. Doing so is computationally expensive, although if this computation were to be performed infrequently the cost could be negligible compared to the simulation cost. Additionally, finding $\eta$ requires solving a transcendental equation in at least two variables ($\eta$, $\varepsilon$), both for the initial fitting of constants, and for the optimisation on the fly. We therefore propose a simpler model based on the behaviour of $\Lambda(s)$.

Restricting ourselves for this example to a Gaussian kernel, we may consider the asymptotic behaviour of the kernel as $\lambda_{n}$ is large. This gives:

\begin{equation}
    \frac{1}{\eta}\int_{0}^{\eta}\rho\left(\frac{s}{\eta}\right)e^{i\lambda_{n}\Delta Ts}ds=\int_{0}^{1}\rho\left(s\right)e^{i\lambda_{n}\Delta T\eta s}ds\sim C_{0}e^{-C_{1}(\left|\lambda_{n}\right|\Delta T\eta)^{2}}.
    \label{eq:gauss_int}
\end{equation}

We then multiply our approximation by $1=\eta^{2}/\eta^{2}$, to obtain:

\begin{equation}
    \frac{\eta^{2}}{\eta^{2}}C_{1}\Delta T^{3}\varepsilon\Lambda(\eta) \approx \frac{D_{1}\Delta T\varepsilon}{\eta^{2}},
\end{equation}

\begin{figure}[H]
    \centering
    \includegraphics[scale=0.7]{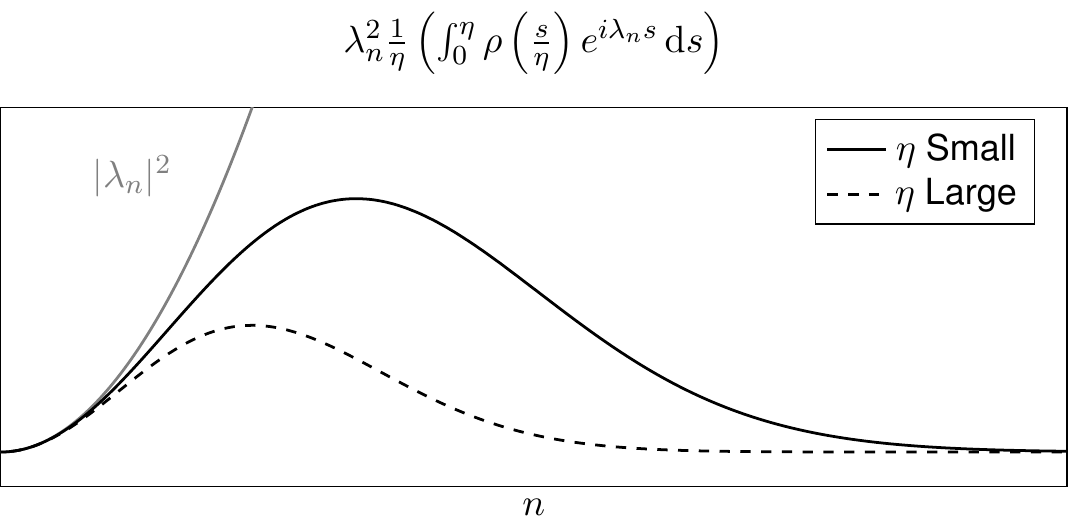}
    \caption{Examples of the function $x^{2}e^{-Cx^{2}}$, showing that it is bounded independently of $x$ and that it tends rapidly to zero as $x\to\infty$. This is used conceptually in bounding the $\Lambda$-term, i.e. the mitigated stiffness, in \cref{eq:gauss_int}. The unmitigated stiffness is shown in grey.}
    \label{fig:bounds_function}
\end{figure}

for some constant, $D_{1}$, since $x^{2}e^{-x^{2}}$ is bounded independently of $x$ (\textit{cf.}~\ref{fig:bounds_function}). We then replace our first term in~\cref{eq:optim} and seek fixed points corresponding to the minimum error. This yields:

\begin{equation}
    \eta_{\mathrm{optimal}} = \sqrt{\frac{D_{1}\Delta T}{C_{2} + C_{3}\varepsilon}}.
    \label{eq:phi_min}
\end{equation}

This equation provides an estimate for the optimal averaging window length, $\eta_{\mathrm{opt}}$, in terms of the computational parameters and the empirically-fit constants. This result is consistent with \cref{thm:apint_conv}, as it exhibits a clear minimum for $\mathcal{O}(1)$ values of $\varepsilon$, with the optimal averaging window increasing as $\varepsilon \to 0$, as the asymptotic theory predicts. Both approximations are shown in~\cref{fig:window_prediction} for a set of minima extracted from a series of runs of the algorithm.

The full model given in~\cref{eq:opt_diff} provides a much closer approximation both to the behaviour for $\varepsilon = 1$ and as $\varepsilon\to 0$, and as an actual fit to the points. It does this, however, at the cost of several orders of magnitude more computational difficulty. The simple model of~\cref{eq:phi_min}, on the other hand, provides a reasonable approximation to the error as a function of $\varepsilon$, but has the disadvantage of poorly resolving the trend in the limit as $\varepsilon\to 0$. While the simple prediction underestimates the optimal as $\varepsilon\to 0$, the behaviour in this range is well-understood (\textit{cf.} \cite{Haut_Wingate_14}, \cite{Ariel_etal_16}) and so a hybrid model may easily be applied in practice.

\begin{figure}[H]
\centering
    \includegraphics[scale=0.55]{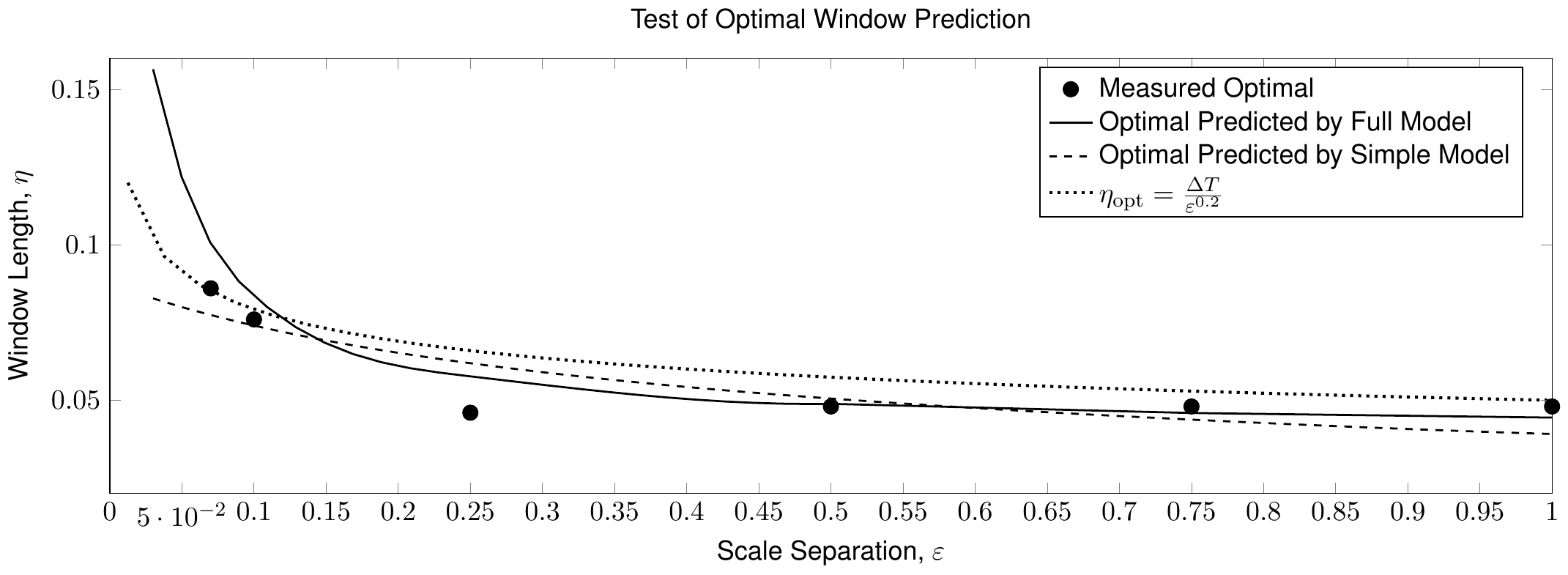}
    \caption{The optimal averaging window for the APinT solver is predicted in three different ways as a function of the scale separation. The measured optimal values for several runs with a coarse timestep of $\Delta T = 0.05$ are shown with the filled circles. The so-called `full' or expensive model from \cref{eq:opt_diff} is shown with the green curve. This mode shows good agreement throughout the range and handles the long averaging windows needed as $\varepsilon\to 0$ as well. The simple model derived from asymptotic analysis on a Gaussian kernel is shown in red, and provides similar accuracy as the full model outside of the small-$\varepsilon$ region, but at a dramatically reduced computational cost. Finally, the dashed line indicated the assumed scaling on the averaging window given in \cref{eq:eta_scaling} with $s$ taken empirically as 0.2. The trend towards a longer time averaging window being necessary for smaller $\varepsilon$ is captured, while this scaling somehwat overestimates the window for larger values of scale separation, although it may be computed very cheaply.}
    \label{fig:window_prediction}
\end{figure}

\section{Conclusion}

We have investigated the convergence of a Parareal method using the APinT coarse solver, which provides a technique by which oscillatory-stiff equations may be solved with the Parareal method. The convergence of this method is due to the averaging applied to the coarse solution, which filters the fast waves and mitigates the oscillatory stiffness present in many of the equations of mathematical physics. This averaging must be performed over the entirety of the nonlinear operator due to the role the direct and near-resonances play in the oscillatory stiffness of the system.

By describing the error of the coarse solver in terms of the interplay between the average over the rapid oscillations and the timestepping, we show the method converges for finite scale separation, significantly extending the domain of applicability for this method.

We have shown here that this method is convergent across a wide range of scale separation, which is an improvement on the prior result\cite{Haut_Wingate_14} which held only in the small-$\varepsilon$ limit. Further, in \cref{sect:optimal_averaging} we considered both a full and a reduced model to predict the optimal averaging window in practical codes. 

\appendix
\section{Proof of \cref{lemma:avg_err}}
\label{sect:avg_err}

Consider 
\[
\frac{d\mathbf{u}}{dt}\left(t\right)=\mathbf{f}\left(\frac{t}{\varepsilon},\mathbf{u}\left(t\right)\right),\,\,\,\,0\leq t\leq h,
\]
and its averaged version 
\[
\frac{d\overline{\mathbf{u}}}{dt}\left(t\right)=\overline{\mathbf{f}}_{\eta'}\left(\frac{t}{\varepsilon},\overline{\mathbf{u}}\left(t\right)\right),\,\,\,\,0\leq t\leq h,
\]
where with $\eta'=\varepsilon h\eta$,
\[
\overline{\mathbf{f}}_{\eta'}\left(\frac{t}{\varepsilon},\overline{\mathbf{u}}\left(t\right)\right)=\frac{1}{\eta'}\int_{0}^{\eta'}\rho\left(\frac{s}{\eta'}\right)\mathbf{f}\left(\frac{t+s}{\varepsilon},\overline{\mathbf{u}}\left(t\right)\right)ds.
\]
Then 
\[
\left\Vert \mathbf{u}\left(t\right)-\overline{\mathbf{u}}\left(t\right)\right\Vert =\mathcal{O}\left(\varepsilon\eta h\right),\,\,\,\,0\leq t\leq h.
\]

To prove this, change variables: $\tau=t/\left(h\varepsilon\right)$
and 
\[
\mathbf{v}\left(\tau\right)=\mathbf{v}\left(t/h\varepsilon\right)=\mathbf{u}\left(t\right),
\]
\[
\overline{\mathbf{v}}\left(\tau\right)=\overline{\mathbf{v}}\left(t/h\varepsilon\right)=\overline{\mathbf{u}}\left(t\right).
\]
Then 
\begin{eqnarray*}
\frac{d}{dt}\mathbf{u}\left(t\right) & = & \frac{d}{dt}\mathbf{v}\left(t/h\varepsilon\right)\\
 & = & \frac{1}{h\varepsilon}\frac{d\mathbf{v}}{d\tau}\left(\tau\right).
\end{eqnarray*}
Thus,
\[
\frac{d\mathbf{v}}{d\tau}\left(\tau\right)=h\varepsilon\mathbf{f}\left(h\tau,\mathbf{v}\left(\tau\right)\right)\equiv h\varepsilon\mathbf{g}_{h}\left(\tau,\mathbf{v}\left(\tau\right)\right),\,\,\,\,0\leq\tau\leq\frac{1}{\varepsilon},
\]
and, with $h\tau'=s/\varepsilon$, $ds=\varepsilon hd\tau'$,
\begin{eqnarray*}
\overline{\mathbf{f}}_{\eta}\left(\frac{t}{\varepsilon},\overline{\mathbf{v}}\left(\tau\right)\right) & = & \frac{1}{\varepsilon h\eta}\int_{0}^{\varepsilon h\eta}\rho\left(\frac{s}{\varepsilon h\eta}\right)\mathbf{f}\left(\frac{t+s}{\varepsilon},\overline{\mathbf{v}}\left(\tau\right)\right)ds\\
 & = & \frac{1}{\varepsilon h\eta}\int_{0}^{\varepsilon h\eta}\rho\left(\frac{s}{\varepsilon h\eta}\right)\mathbf{f}\left(\frac{s}{\varepsilon}+h\tau,\overline{\mathbf{v}}\left(\tau\right)\right)ds\\
 & = & \frac{\varepsilon h}{\varepsilon h\eta}\int_{0}^{\varepsilon h\eta/\left(\varepsilon h\right)}\rho\left(\frac{\varepsilon h\tau'}{\varepsilon h\eta}\right)\mathbf{f}\left(h\tau'+h\tau,\overline{\mathbf{v}}\left(\tau\right)\right)d\tau'\\
 & = & \frac{1}{\eta}\int_{0}^{\eta}\rho\left(\frac{\tau'}{\eta}\right)\mathbf{f}\left(h\left(\tau'+\tau\right),\overline{\mathbf{v}}\left(\tau\right)\right)d\tau'\\
 & = & \frac{1}{\eta}\int_{0}^{\eta}\mathbf{g}_{h}\left(\tau,+\tau',\mathbf{v}\left(\tau\right)\right)d\tau'\\
 & = & \overline{\left(\mathbf{g}_{h}\right)}_{\eta}\left(\tau,\mathbf{v}\left(\tau\right)\right).
\end{eqnarray*}
Thus, 
\[
\frac{d\overline{\mathbf{v}}}{d\tau}\left(\tau\right)=h\varepsilon\frac{d\overline{\mathbf{u}}}{dt}\left(t\right)=h\varepsilon\overline{\mathbf{f}}_{\eta}\left(\frac{t}{\varepsilon},\overline{\mathbf{v}}\left(\tau\right)\right)=h\varepsilon\overline{\left(\mathbf{g}_{h}\right)}_{\eta}\left(\tau,\mathbf{v}\left(\tau\right)\right),\,\,\,\,0\leq\tau\leq\frac{1}{\varepsilon}.
\]

Define 
\[
\mathbf{f}_{\eta}\left(t,\mathbf{x}\right)=\frac{1}{\eta}\int_{0}^{\eta}\rho\left(\frac{s}{\eta}\right)\mathbf{f}\left(t+s,\mathbf{x}\right)ds.
\]

\begin{lemma}
If $\phi\left(t\right)$ is Lipschitz-continuous with Lipschitz-constant
$\lambda$. Then
\[
\left|\phi\left(t\right)-\phi_{\eta}\left(t\right)\right|\leq C_{0}\lambda\eta,
\]
where
\[
C_{0}=\int_{0}^{1}\rho\left(s\right)sds.
\]
\label{lemma:pt1}
\end{lemma}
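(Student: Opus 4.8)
The plan is to exploit the fact that the smooth averaging kernel is normalised and nonnegative, i.e. $\int_{0}^{1}\rho(u)\,\mathrm{d}u = 1$ and $\rho \ge 0$ on $[0,1]$, so that $\phi_{\eta}(t)$ is a genuine weighted average of the values $\phi(t+s)$ over the window $s\in[0,\eta]$. The key observation is that the normalisation lets me rewrite the unaveraged value $\phi(t)$ in the very same averaged form: after noting $\frac{1}{\eta}\int_{0}^{\eta}\rho(s/\eta)\,\mathrm{d}s = \int_{0}^{1}\rho(u)\,\mathrm{d}u = 1$ via the substitution $u=s/\eta$, we have $\phi(t) = \frac{1}{\eta}\int_{0}^{\eta}\rho(s/\eta)\phi(t)\,\mathrm{d}s$. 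Subtracting the definition of $\phi_{\eta}(t)$ then collapses the difference into a single integral of the increment $\phi(t)-\phi(t+s)$ against the kernel:
\[
\phi(t) - \phi_{\eta}(t) = \frac{1}{\eta}\int_{0}^{\eta}\rho\left(\frac{s}{\eta}\right)\bigl(\phi(t) - \phi(t+s)\bigr)\,\mathrm{d}s.
\]

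First I would take absolute values and move them inside the integral, which is legitimate precisely because $\rho\ge 0$. Next I would invoke the hypothesis that $\phi$ is Lipschitz with constant $\lambda$, giving the pointwise bound $\left|\phi(t)-\phi(t+s)\right| \le \lambda|s| = \lambda s$ for $s\in[0,\eta]$. This converts the estimate into
\[
\left|\phi(t) - \phi_{\eta}(t)\right| \le \frac{\lambda}{\eta}\int_{0}^{\eta}\rho\left(\frac{s}{\eta}\right) s\,\mathrm{d}s.
\]

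Finally I would evaluate the remaining integral by the substitution $u=s/\eta$, $\mathrm{d}s=\eta\,\mathrm{d}u$, under which $\frac{1}{\eta}\int_{0}^{\eta}\rho(s/\eta)s\,\mathrm{d}s = \eta\int_{0}^{1}\rho(u)u\,\mathrm{d}u = \eta C_{0}$, yielding $\left|\phi(t)-\phi_{\eta}(t)\right| \le C_{0}\lambda\eta$ as claimed. The computation is otherwise routine; the only point that genuinely needs care is the appeal to the normalisation and nonnegativity of $\rho$, which are implicit in calling $\rho$ a smooth averaging kernel. If $\rho$ were allowed to change sign, the same argument would go through with $\left|\rho\right|$ in place of $\rho$ and the constant would become $\int_{0}^{1}\left|\rho(u)\right| u\,\mathrm{d}u$; since the stated constant $C_{0}=\int_{0}^{1}\rho(s)s\,\mathrm{d}s$ carries no absolute value, I read the hypotheses as guaranteeing $\rho\ge 0$.
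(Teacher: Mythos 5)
Your argument is correct and is essentially the paper's own proof: both use the normalisation of the kernel to pull $\phi(t)$ inside the average, bound the increment $\left|\phi(t)-\phi(t+s)\right|\le\lambda s$ by Lipschitz continuity, and rescale the integral to produce $C_{0}\lambda\eta$. Your explicit remark that nonnegativity of $\rho$ is needed to move the absolute value inside the integral is a small point of care that the paper glosses over (it writes that step as an equality), but the substance is the same.
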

\begin{proof}
Using that
\[
\frac{1}{\eta}\int_{0}^{\eta}\rho\left(\frac{s}{\eta}\right)ds=\int_{0}^{1}\rho\left(s\right)ds=1,
\]
we have that 
\begin{eqnarray*}
\left|\phi\left(t\right)-\phi_{\eta}\left(t\right)\right| & = & \left|\phi\left(t\right)-\frac{1}{\eta}\int_{0}^{\eta}\rho\left(\frac{s}{\eta}\right)\phi\left(s+t\right)ds\right|\\
 & = & \frac{1}{\eta}\int_{0}^{\eta}\rho\left(\frac{s}{\eta}\right)\left|\phi\left(t\right)-\phi\left(s+t\right)\right|ds\\
 & \leq & \frac{1}{\eta}\int_{0}^{\eta}\rho\left(\frac{s}{\eta}\right)s\lambda ds\\
 & = & \eta\lambda\int_{0}^{1}\rho\left(s\right)sds.
\end{eqnarray*}

\end{proof}
\begin{lemma}
    \label{lemma:pt2}
Consider 
\[
\frac{d\mathbf{v}}{d\tau}\left(t\right)=h\varepsilon\mathbf{f}\left(ht,\mathbf{v}\left(t\right)\right),\,\,\,0\leq t\leq\varepsilon^{-1},
\]

\end{lemma}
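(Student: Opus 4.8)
The plan is to establish the estimate by a Gronwall argument on the rescaled slow time, comparing the exact flow $\mathbf{v}$ with the window-averaged flow $\overline{\mathbf{v}}$ launched from the same initial datum. First I would put both equations in integral form and subtract, giving for $0\le t\le\varepsilon^{-1}$
\[
\mathbf{v}(t)-\overline{\mathbf{v}}(t)=h\varepsilon\int_{0}^{t}\left[\mathbf{g}_{h}(s,\mathbf{v}(s))-\overline{(\mathbf{g}_{h})}_{\eta}(s,\overline{\mathbf{v}}(s))\right]ds.
\]
Adding and subtracting the averaged field evaluated along the \emph{true} trajectory, $\overline{(\mathbf{g}_{h})}_{\eta}(s,\mathbf{v}(s))$, splits the integrand into an averaging-error part $\mathbf{g}_{h}(s,\mathbf{v}(s))-\overline{(\mathbf{g}_{h})}_{\eta}(s,\mathbf{v}(s))$ and a Lipschitz part $\overline{(\mathbf{g}_{h})}_{\eta}(s,\mathbf{v}(s))-\overline{(\mathbf{g}_{h})}_{\eta}(s,\overline{\mathbf{v}}(s))$.

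The Lipschitz part is routine: since $\overline{(\mathbf{g}_{h})}_{\eta}$ inherits the state-Lipschitz constant $\beta$ of $\mathbf{f}$, it is bounded by $\beta\|\mathbf{v}(s)-\overline{\mathbf{v}}(s)\|$, contributing the term $h\varepsilon\beta\int_{0}^{t}\|\mathbf{v}-\overline{\mathbf{v}}\|\,ds$ that will drive the Gronwall loop.

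The main obstacle is the averaging-error part, since the state is not frozen across the window and \cref{lemma:pt1} cannot be applied directly to $\mathbf{g}_{h}(\cdot,\mathbf{v}(s))$ without a bound on the time-derivative of $\mathbf{f}$, which is not assumed. My resolution is to trade the (unavailable) time regularity of $\mathbf{f}$ for the known slow evolution of $\mathbf{v}$. Using $\mathbf{g}_{h}(s,\mathbf{v}(s))=(h\varepsilon)^{-1}\,d\mathbf{v}/ds$, I would first unfreeze the state inside the window average at the cost of $\beta\|\mathbf{v}(s+\sigma)-\mathbf{v}(s)\|\le\beta h\varepsilon M\sigma$ per shift $\sigma$ --- a higher-order $\mathcal{O}(\beta\varepsilon)$ correction, which is the origin of the $\tfrac12\beta\varepsilon$ factor in the stated bound. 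The window average of the true right-hand side then becomes $(h\varepsilon)^{-1}$ times the window average of $d\mathbf{v}/ds$, and integrating this piece in $s$ and exchanging the order of integration collapses it to the boundary differences $[\mathbf{v}(t)-\mathbf{v}_{\eta}(t)]-[\mathbf{v}(0)-\mathbf{v}_{\eta}(0)]$, where $\mathbf{v}_{\eta}$ denotes the window average of $\mathbf{v}$ in the notation of \cref{lemma:pt1}. Each bracket is then exactly of the form controlled by \cref{lemma:pt1} with Lipschitz constant $\|d\mathbf{v}/ds\|\le h\varepsilon M$, giving the clean $\mathcal{O}(C_{0}h\varepsilon M\eta)$ estimate.

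Finally I would assemble the two contributions and apply Gronwall on $t\in[0,\varepsilon^{-1}]$; because the forcing is $\mathcal{O}(h\varepsilon)$ while the horizon is only $\mathcal{O}(\varepsilon^{-1})$, the exponential factor is $\mathcal{O}(1)$ and the averaging error does not amplify. Translating back to the unscaled time through the change of variables of the appendix then recovers $\|\mathbf{u}(t)-\overline{\mathbf{u}}(t)\|=\mathcal{O}(\varepsilon\eta h)$ on $0\le t\le h$, with explicit constant $M(1+\tfrac12\beta\varepsilon)$. I expect the unfreezing-plus-exchange-of-integration step to be the crux, since it is precisely what lets the boundedness of $\mathbf{f}$ --- rather than a bound on its time-derivative --- carry the averaging error.
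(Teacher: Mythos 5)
Your argument is correct, but note that it proves the end-to-end averaging estimate (\cref{lemma:avg_err}, i.e.\ the unlabelled Gronwall argument that closes the appendix), whereas \cref{lemma:pt2} as the paper uses it (its hypotheses and conclusion sit just outside the lemma environment) asserts only the intermediate commutator bound $\left\Vert \phi_{\eta}(t)-\int_{0}^{t}\mathbf{f}_{\eta}(h\tau,\mathbf{v}(\tau))\,d\tau\right\Vert \leq C_{0}(1+\lambda h)M\eta$ for $\phi(t)=\int_{0}^{t}\mathbf{f}(h\tau,\mathbf{v}(\tau))\,d\tau$; the Lipschitz splitting and Gronwall loop you describe are exactly the step the paper performs afterwards. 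For the commutator itself your route is genuinely different. The paper writes $\phi_{\eta}(t)$ as a double integral, shifts the inner integration range from $[0,t+s]$ to $[s,t+s]$ (remainder $R_{1}\leq C_{0}M\eta$, using only $\Vert\mathbf{f}\Vert\leq M$), changes variables, unfreezes the state $\mathbf{v}(\tau+s)\to\mathbf{v}(\tau)$ (remainder $R_{2}\leq C_{0}h\lambda M\eta$, using $\Vert d\mathbf{v}/d\sigma\Vert\leq h\varepsilon M$ and $t\leq\varepsilon^{-1}$), and then swaps the order of integration. You perform the same unfreezing but replace the index shift by the observation that the frozen-state discrepancy equals $(h\varepsilon)^{-1}\int_{0}^{t}\bigl(\mathbf{v}'-(\mathbf{v}')_{\eta}\bigr)\,ds$, which the fundamental theorem of calculus collapses to the boundary brackets $\mathbf{v}-\mathbf{v}_{\eta}$ at $t$ and $0$, each controlled by \cref{lemma:pt1} applied to the solution itself with Lipschitz constant $h\varepsilon M$. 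The ingredients are identical and the resulting order, $C_{0}M\eta(1+\mathcal{O}(\lambda h))$, agrees; your version makes the cancellation mechanism (window averaging commutes with time integration up to boundary terms of size $C_{0}h\varepsilon M\eta$) more transparent, while the paper's $R_{1}$ estimate is marginally more general in that it never invokes the ODE for the leading term. One small caveat: the correction produced by your unfreezing step is of size $\tfrac{1}{2}\beta h$, consistent with the appendix's factor $(1+\lambda h)$ rather than the $(1+\tfrac{1}{2}\beta\varepsilon)$ quoted in the main-text statement of \cref{lemma:avg_err}; that discrepancy is the paper's, not yours.
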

with $\mathbf{f}$ continuous in each argument. Also assume that 
\[
\left\Vert \mathbf{f}\left(ht,\mathbf{u}\right)-\mathbf{f}\left(ht,\mathbf{w}\right)\right\Vert \leq\lambda\left\Vert \mathbf{u}-\mathbf{w}\right\Vert ,
\]
and 
\[
M=\sup_{x\in D}\sup_{0\leq t\leq\varepsilon^{-1}}\left\Vert \mathbf{f}\left(ht,\mathbf{w}\right)\right\Vert <\infty.
\]
Then defining 
\[
\phi\left(t\right)=\int_{0}^{t}\mathbf{f}\left(h\tau,\mathbf{v}\left(\tau\right)\right)d\tau,
\]
we have that 
\[
\left|\phi_{\eta}\left(t\right)-\int_{0}^{t}\mathbf{f}_{\eta}\left(h\tau,\mathbf{v}\left(\tau\right)\right)d\tau\right|\leq C_{0}\left(1+\lambda h\right)M\eta.
\]

\begin{proof}
We calculate that 
\begin{eqnarray*}
\phi_{\eta}\left(t\right) & = & \frac{1}{\eta}\int_{0}^{\eta}\rho\left(\frac{s}{\eta}\right)\phi\left(s+t\right)ds\\
 & = & \frac{1}{\eta}\int_{0}^{\eta}\rho\left(\frac{s}{\eta}\right)\left(\int_{0}^{t+s}\mathbf{f}\left(h\tau,\mathbf{v}\left(\tau\right)\right)d\tau\right)ds\\
 & = & \frac{1}{\eta}\int_{0}^{\eta}\rho\left(\frac{s}{\eta}\right)\left(\int_{s}^{t+s}\mathbf{f}\left(h\tau,\mathbf{v}\left(\tau\right)\right)d\tau\right)ds+R_{1}\\
 & = & \frac{1}{\eta}\int_{0}^{\eta}\rho\left(\frac{s}{\eta}\right)\left(\int_{0}^{t}\mathbf{f}\left(h\left(\tau+s\right),\mathbf{v}\left(\tau+s\right)\right)d\tau\right)ds+R_{1}\\
 & = & \frac{1}{\eta}\int_{0}^{\eta}\rho\left(\frac{s}{\eta}\right)\left(\int_{0}^{t}\mathbf{f}\left(h\left(\tau+s\right),\mathbf{v}\left(\tau\right)\right)d\tau\right)ds+R_{1}+R_{2}\\
 & = & \int_{0}^{t}\left(\frac{1}{\eta}\int_{0}^{\eta}\rho\left(\frac{s}{\eta}\right)\mathbf{f}\left(h\left(\tau+s\right),\mathbf{v}\left(\tau\right)\right)ds\right)d\tau+R_{1}+R_{2}\\
 & = & \int_{0}^{t}\int_{0}^{t}\mathbf{f}_{\eta}\left(h\tau,\mathbf{v}\left(\tau\right)\right)d\tau+R_{1}+R_{2},
\end{eqnarray*}
where 
\begin{eqnarray*}
\left\Vert R_{1}\right\Vert  & = & \left\Vert \frac{1}{\eta}\int_{0}^{\eta}\rho\left(\frac{s}{\eta}\right)\left(\int_{0}^{s}\mathbf{f}\left(h\tau,\mathbf{v}\left(\tau\right)\right)d\tau\right)ds\right\Vert \\
 & \leq & \frac{1}{\eta}\int_{0}^{\eta}\rho\left(\frac{s}{\eta}\right)\int_{0}^{s}\left\Vert \mathbf{f}\left(h\tau,\mathbf{v}\left(\tau\right)\right)\right\Vert d\tau ds\\
 & \leq & \frac{1}{\eta}\int_{0}^{\eta}\rho\left(\frac{s}{\eta}\right)\int_{0}^{s}Md\tau ds\\
 & = & M\frac{1}{\eta}\int_{0}^{\eta}\rho\left(\frac{s}{\eta}\right)sds\\
 & = & M\eta\int_{0}^{1}\rho\left(s\right)sds\\
 & = & C_{0}M\eta,
\end{eqnarray*}
and 
\begin{eqnarray*}
\left\Vert R_{2}\right\Vert  & = & \left\Vert \frac{1}{\eta}\int_{0}^{\eta}\rho\left(\frac{s}{\eta}\right)\int_{0}^{t}\left(\mathbf{f}\left(h\left(\tau+s\right),\mathbf{v}\left(\tau+s\right)\right)-\mathbf{f}\left(h\left(\tau+s\right),\mathbf{v}\left(\tau\right)\right)\right)d\tau ds\right\Vert \\
 & \leq & \frac{1}{\eta}\int_{0}^{\eta}\rho\left(\frac{s}{\eta}\right)\int_{0}^{t}\left\Vert \mathbf{f}\left(h\left(\tau+s\right),\mathbf{v}\left(\tau+s\right)\right)-\mathbf{f}\left(h\left(\tau+s\right),\mathbf{v}\left(\tau\right)\right)\right\Vert d\tau ds\\
 & \leq & \frac{1}{\eta}\lambda\int_{0}^{\eta}\rho\left(\frac{s}{\eta}\right)\int_{0}^{t}\left\Vert \mathbf{v}\left(\tau+s\right)-\mathbf{v}\left(\tau\right)\right\Vert d\tau ds\\
 & = & \frac{1}{\eta}\lambda\int_{0}^{\eta}\rho\left(\frac{s}{\eta}\right)\int_{0}^{t}\left\Vert \int_{\tau}^{s+\tau}\frac{d\mathbf{v}}{d\sigma}\left(\sigma\right)d\sigma\right\Vert d\tau ds\\
 & = & \frac{1}{\eta}\lambda\int_{0}^{\eta}\rho\left(\frac{s}{\eta}\right)\int_{0}^{t}\left\Vert \int_{\tau}^{s+\tau}h\varepsilon\mathbf{f}\left(h\sigma,\mathbf{v}\left(\sigma\right)\right)d\sigma\right\Vert d\tau ds\\
 & \leq & \frac{1}{\eta}h\varepsilon\lambda\int_{0}^{\eta}\rho\left(\frac{s}{\eta}\right)\int_{0}^{t}\int_{\tau}^{s+\tau}\left\Vert \mathbf{f}\left(h\sigma,\mathbf{v}\left(\sigma\right)\right)\right\Vert d\sigma d\tau ds\\
 & \leq & \frac{1}{\eta}h\varepsilon\lambda M\int_{0}^{\eta}\rho\left(\frac{s}{\eta}\right)\int_{0}^{t}\int_{\tau}^{s+\tau}d\sigma d\tau ds\\
 & = & \frac{1}{\eta}h\varepsilon\lambda M\int_{0}^{\eta}\rho\left(\frac{s}{\eta}\right)\int_{0}^{t}sd\tau ds\\
 & = & C_{0}\eta h\lambda M\varepsilon t\\
 & \leq & C_{0}h\eta\lambda M.
\end{eqnarray*}
In the last inequality, we used that $0\leq t\leq\varepsilon^{-1}$.\end{proof}
Consider 
\[
\frac{d\mathbf{v}}{d\tau}\left(t\right)=h\varepsilon\mathbf{f}\left(ht,\mathbf{v}\left(t\right)\right),\,\,\,0\leq t\leq\varepsilon^{-1},
\]
with the same assumptions as in the previous lemmas. Let 
\[
\frac{d\mathbf{\overline{v}}}{d\tau}\left(t\right)=h\varepsilon\mathbf{f}_{\eta}\left(ht,\overline{\mathbf{v}}\left(t\right)\right),\,\,\,0\leq t\leq\varepsilon^{-1}.
\]
Then 
\[
\left\Vert \mathbf{v}\left(t\right)-\overline{\mathbf{v}}\left(t\right)\right\Vert \leq C_{1}h\varepsilon\eta,\,\,\,\,0\leq ht\leq\varepsilon^{-1}.
\]
Note that 
\[
\mathbf{v}\left(t\right)=\mathbf{v}\left(0\right)+h\varepsilon\int_{0}^{t}\mathbf{f}\left(h\tau,\mathbf{v}\left(\tau\right)\right)d\tau.
\]
By \cref{lemma:pt2},
\[
\int_{0}^{t}\mathbf{f}\left(h\tau,\mathbf{v}\left(\tau\right)\right)d\tau=\int_{0}^{t}\mathbf{f}_{\eta}\left(h\tau,\mathbf{v}\left(\tau\right)\right)d\tau+E_{0},
\]
where 
\[
\left\Vert E_{0}\right\Vert \leq C_{0}\left(1+\lambda h\right)M\eta.
\]
Therefore,
\[
\mathbf{v}\left(t\right)=\mathbf{v}\left(0\right)+h\varepsilon\int_{0}^{t}\mathbf{f}_{\eta}\left(h\tau,\mathbf{v}\left(\tau\right)\right)d\tau+E_{1},
\]
where 
\[
\left\Vert E_{1}\right\Vert =\left\Vert h\varepsilon E_{0}\right\Vert \leq C_{0}\left(1+\lambda h\right)M\eta h\varepsilon.
\]
Also, since 
\[
\overline{\mathbf{v}}\left(t\right)=\mathbf{v}\left(0\right)+h\varepsilon\int_{0}^{t}\mathbf{f}_{\eta}\left(h\tau,\overline{\mathbf{v}}\left(t\right)\right)d\tau,
\]
we have that 
\begin{eqnarray*}
\left\Vert \mathbf{v}\left(t\right)-\overline{\mathbf{v}}\left(t\right)\right\Vert  & \leq & h\varepsilon\int_{0}^{t}\left\Vert \mathbf{f}_{\eta}\left(h\tau,\mathbf{v}\left(\tau\right)\right)-\mathbf{f}_{\eta}\left(h\tau,\overline{\mathbf{v}}\left(t\right)\right)\right\Vert d\tau+C_{0}\left(1+\lambda h\right)M\eta h\varepsilon\\
 & \leq & h\varepsilon\lambda\int_{0}^{t}\left\Vert \mathbf{v}\left(\tau\right)-\overline{\mathbf{v}}\left(t\right)\right\Vert d\tau+C_{0}\left(1+\lambda h\right)M\eta h\varepsilon.
\end{eqnarray*}
Finally, by Gronwall's inequality, 
\[
\left\Vert \mathbf{v}\left(t\right)-\overline{\mathbf{v}}\left(t\right)\right\Vert \leq C_{0}\left(1+\lambda h\right)M\eta h\varepsilon e^{h\varepsilon\lambda t}.
\]

\newpage
\section*{Acknowledgments}
We would like the acknowledge the support of the University of Exeter and Los Alamos National Laboratory.

\bibliographystyle{siamplain}
\bibliography{references}
\end{document}